\newcommand{\R}{\mathbb{R}}
\newcommand{\Z}{\mathbb{Z}}
\newcommand{\N}{\mathbb{N}}
\DeclareMathOperator{\supp}{supp}
\newtheorem{theorem}{Theorem}[section]
\newtheorem{lemma}[theorem]{Lemma}
\newtheorem{proposition}[theorem]{Proposition}
\newtheorem{corollary}[theorem]{Corollary}
\theoremstyle{remark}
\newtheorem{remark}{Remark}[section]
\theoremstyle{definition}
\newtheorem{definition}{Definition}[section]
\numberwithin{equation}{section}
\def\@cite#1#2{[{{\bfseries #1}\if@tempswa , #2\fi}]}
\begin{document}
\begin{center}
\Large{{\bf
Small data blow-up for the weakly coupled system of the generalized Tricomi equations with multiple propagation speeds}}
\end{center}

\vspace{5pt}

\begin{center}
Masahiro Ikeda%
\footnote{
Department of Mathematics, Faculty of Science and Technology, Keio University, 3-14-1 Hiyoshi, Kohoku-ku, Yokohama, 223-8522, Japan/Center for Advanced Intelligence Project, RIKEN, Japan,
E-mail:\ {\tt masahiro.ikeda@keio.jp/masahiro.ikeda@riken.jp}},\
Jiayun Lin
\footnote{
Department of Mathematics and Science, School of Sciences, Zhejiang Sci-Tech University, 310018, Hangzhou, China,
E-mail:\ {\tt jylin@zstu.edu.cn}},\
Ziheng Tu
\footnote{
School of Data Science, Zhejiang University of Finance and Economics, 310018, Hangzhou, China,
E-mail:\ {\tt tuziheng@zufe.edu.cn}}
\end{center}

\newenvironment{summary}{\vspace{.5\baselineskip}\begin{list}{}{%
     \setlength{\baselineskip}{0.85\baselineskip}
     \setlength{\topsep}{0pt}
     \setlength{\leftmargin}{12mm}
     \setlength{\rightmargin}{12mm}
     \setlength{\listparindent}{0mm}
     \setlength{\itemindent}{\listparindent}
     \setlength{\parsep}{0pt}
     \item\relax}}{\end{list}\vspace{.5\baselineskip}}
\begin{summary}
{\footnotesize {\bf Abstract.}
In the present paper, we study the Cauchy problem for the weakly coupled system of the generalized Tricomi equations with multiple propagation speeds. Our aim of this paper is to prove a small data blow-up result and an upper estimate of lifespan of the problem for a suitable compactly supported initial data in the subcritical and critical cases of the Strauss type. The proof is based on the framework of the argument in the paper \cite{ISW19}. One of our new contributions is to construct two families of special solutions to the free equation (see (\ref{testsub}) or (\ref{testcri})) as the test functions and prove their several properties. We emphasize that the system with two different propagation speeds is treated in this paper and the assumption on the initial data is improved from the point-wise positivity to the integral positivity.
}
\end{summary}

{\footnotesize{\it Mathematics Subject Classification}\/ (2010): %
35B44; 
35A01; 
35L15;
35L05
}\\
{\footnotesize{\it Key words and phrases}\/: %
Generalized Tricomi equations, Weakly coupled system, Multiple propagation speed, Small data blow-up, Lifespan, Critical curve, Special solutions
}
\tableofcontents

\section{Introduction}
\subsection{Setting and backgrounds}
\ \ In the present paper, we study the Cauchy problem of the weakly coupled system of the generalized Tricomi equations with multiple propagation speed on the Euclidean space $\R^N$:
\begin{equation}\label{sys}
\left\{
\begin{array}{ll}
\partial_t^2u(x,t)-t^{m_1}\Delta u(x,t)=G_1(v(x,t)),\ &(x,t)\ \in\ \R^N\times[0,T),\\
\partial_t^2v(x,t)-t^{m_2}\Delta v(x,t)=G_2(u(x,t)),\ &(x,t)\ \in\ \R^N\times[0,T),\\
u(x,0)=\varepsilon f_1(x),\ \partial_tu(x,0)=\varepsilon g_1(x),\ &x\ \in\ \R^N,\\
v(x,0)=\varepsilon f_2(x),\ \partial_tv(x,0)=\varepsilon g_2(x),\ &x\ \in\ \R^N,\\
\end{array}
\right.
\end{equation}
where $N\in \mathbb{N}$ denotes the spatial dimension, $\partial_t:=\partial/\partial t$ is the time derivative, $\Delta:=\sum_{j=1}^N\partial^2/\partial_{x_j}^2$ is the Laplace operator on $\R^N$, $m_1, m_2\ge 0$ stands for the strength of the diffusion, $G_1:\mathbb{R}\rightarrow \mathbb{R}_{\ge 0}$ and $G_2:\mathbb{R}\rightarrow \mathbb{R}_{\ge 0}$ are nonlinear functions which satisfy $G_1\in C^1(\mathbb{R})$, $G_2\in C^1(\mathbb{R})$ and the assumption that there exist positive constants $a,b>0$ and $p,q>1$ such that the estimates
\begin{equation}
\label{nonlinearity}
  G_1(0)=0,\ \ G_2(0)=0, \ \ G_1(\sigma)\ge a|\sigma|^p,\ \ G_2(\sigma)\ge b|\sigma|^q
\end{equation}
hold for any $\sigma\in \R$, the pair of functions $(u,v):\mathbb{R}^N\times[0,T)\rightarrow\mathbb{R}^2$ is an unknown function, $f_1,f_2,g_1,g_2$ are prescribed functions describing the shape of the initial data, $\varepsilon>0$ is a small parameter showing the size of the initial data and $T=T(\varepsilon)>0$ denotes the maximal existence time of the function $(u,v)$, which is called lifespan (see Definition \ref{Life}). 

The significance to study the problem (\ref{sys}) mathematically comes from physical problems of Transonic gas dynamics (see \cite[Chapter 4]{B58} and \cite{Frankl} for more detail).

Our aim of the present paper is to prove a small data blow-up result and an upper estimate of lifespan of the problem (\ref{sys}) for suitable compactly supported initial data in the subcritical case and the critical case of the Strauss type, that is, the case $\Omega_{SS}(N,m_1,m_2,p,q)\ge 0$ for $m_1\neq m_2$ and $\Gamma_{SS}(N,m_1,m_2,p,q)\ge 0$ for $m_1 = m_2=m$, where $\Omega_{SS}$ and $\Gamma_{SS}$ are related to the so-called critical curves given by
\begin{align*}
\Omega_{SS}(N,m_1,m_2,p,q):&=
\begin{cases}
     F_{SS}(N,m_1,p,q),&\ m_1>m_2,\\
     F_{SS}(N,m_2,q,p),&\ m_1<m_2,
\end{cases}\\
\Gamma_{SS}(N,m,p,q):&=\max\{F_{SS}(N,m,p,q),F_{SS}(N,m,q,p)\}
\end{align*}
with a rational function $F_{SS}$ of $p,q$,
\[
F_{SS}(N,m,p,q):=-\frac{m+2}{4}(N-1)-\frac{m}{4}-\frac{m}{2q}+(pq-1)^{-1}\left(p+2+\frac 1q\right)
\]
(see Remarks \ref{pq2} and \ref{p-qcurve} for the regions of $(p,q)$). Our result extends the previous results\cite[Theorem 1.1]{KO99}, \cite[Theorem 1]{AKH00}, \cite[Theorem 1.1]{KTW12}, \cite[Proposition 7.1]{ISW19} for the classical case $m_1=m_2=0$ to the general case $m_1,m_2\ge 0$. The proofs of the blow-up results are based on the framework of the argument in the paper \cite{ISW19} with appropriate test functions.
Our new contribution for the proof is to construct the test functions (\ref{testsub}) and (\ref{testcri}) and derive their several properties (Lemma \ref{propy} and Proposition \ref{propertest}). We also give an improvement of the previous blow-up results \cite[Theorem1.1]{He-Witt-Yin17} and \cite[Theorems 1.2 and 1.3]{LTpre} for the Cauchy problem of the single generalized Tricomi equation with the $\rho$-th order power nonlinearity
\begin{equation}
\label{single}
\begin{cases}
     \partial_t^2w(x,t)-t^m\Delta w(x,t)=|w(x,t)|^{\rho},&\ (x,t)\in \mathbb{R}^N\times [0,T),\\
     w(x,0)=\varepsilon f(x),\ \partial_tw(x,0)=\varepsilon g(x),&\
      x\in \mathbb{R}^N,
\end{cases}
\end{equation}
where $w=w(x,t):\mathbb{R}^N\times[0,T)\rightarrow \mathbb{R}$ is an unknown function, $m\ge 0$, $\rho>1$ is the exponent of the nonlinear term and $f,g\in C_0^{\infty}(\R^N)$ are given functions. More precisely, we relax the assumption of the positivity of the initial data $f$ and $g$ in a point-wise sense assumed in \cite[Theorem 1.1]{He-Witt-Yin17}, \cite[Theorem 1.1]{He-Witt-Yin17-2} and \cite[Theorems 1.2 and 1.3]{LTpre} and we only assume the inequality $\displaystyle{\int_{\R^N}g(x)dx>0}$ (see also \cite{ISW19} for the weakly coupled systems of semilinear wave equations, and \cite{SW20} for semilinear wave equation in an exterior domain).

\subsection{Known results}
\ \ \ We recall related results about the problem (\ref{sys}). There are many mathematical results about existence or non-existence of solution and properties of solution such as regularity and time decay to the problem (\ref{sys}) or (\ref{single}) (see \cite{BG99, BG02, He-Witt-Yin17, He-Witt-Yinpe, He-Witt-Yin17-2, RWY15-1, RWY15, RWY18, Y04, Yag06} and their references). The classical Tricomi equation, which was firstly investigated by Tricomi \cite{T23},
\begin{equation}
\label{Ctri}
\partial_t^2w(x,t)-t\partial_x^2w(x,t)=0,\ \ \ (x,t)\in \mathbb{R}\times\mathbb{R},
\end{equation}
is known as a typical second order partial differential equation of mixed type, which means that the cases $t<0$, $t=0$ and $t>0$ are elliptic, parabolic and hyperbolic, respectively. The fundamental solutions to (\ref{Ctri}) were computed explicitly in \cite{BG99, BG02}. Yagdjian \cite{Y04} studied the linear generalized Tricomi equation of the hyperbolic type with an external force $e:\mathbb{R}^N\times[0,\infty)\rightarrow\mathbb{R}$:
\begin{equation}
\label{inhomo}
    \partial_t^2w(x,t)-t^m\Delta w(x,t)=e(x,t),\ \ \ (x,t)\in \mathbb{R}^N\times [0,\infty),
\end{equation}
and derived an explicit formula of the fundamental solution to this problem. Moreover he proved $L^p$-$L^q$ time decay estimates for the solution. Ruan, Witt and Yin \cite{RWY15-1} proved local existence and singularity structures of solution with low regularity to the Cauchy problem of (\ref{inhomo}) with a smooth inhomogeneous term $e=e(t,x,w)$ and discountinuous initial data in the case of $N\ge 2$.

Next we recall results about the Cauchy problem of a semilinear equation with a $\rho$-th order power nonlinearity:
\begin{equation}
\label{nonli}
    \partial_t^2w(x,t)-t^m\Delta w(x,t)=\mathcal{N}(w(x,t)),\ \ \ (x,t)\in \mathbb{R}^N\times [0,T),
\end{equation}
where $\mathcal{N}:\mathbb{R}\rightarrow\mathbb{R}$ is a $\rho$-th order nonlinear function. The typical examples are $\mathcal{N}(z):=\pm|z|^{\rho-1}z$ and $\pm|z|^{\rho}$. D'Ancona \cite{D} obtained the existence of a smooth local solution to the Cauchy problem of (\ref{nonli}) with smooth data for the nonlinearity $\mathcal{N}(w(x,t))=-|w|^{\rho-1}w$ and $\rho<\frac{3m+10}{3m+2}$. Yagdjian \cite{Yag06} proved large data local well-posedness in the Lebesgue space $L^q(\R^N)$ for some $q>1$, and global existence or non-existence of $L^q$-solutions to the equation (\ref{nonli}) for some small initial data under some restrictions on the exponent $\rho$. Ruan, Witt and Yin \cite{RWY15} studied the Cauchy problem (\ref{nonli}) with $m=2l-1$ and $l\in \N$ on a mixed type domain $\R^N\times (-T,T)$ with $N\ge 2$ and proved existence of a local solution to (\ref{nonli}) for low regular initial data. They \cite{RWY18} showed local existence and uniqueness of a minimal regular solution to the Cauchy problem (\ref{nonli}) with $m\in \mathbb{N}$ and some $\rho\in \mathbb{N}$ on the hyperbolic domain $\mathbb{R}^N\times [0,T)$ with $N\ge 2$. He, Witt and Yin \cite{He-Witt-Yin17} studied existence of a global solution to the problem (\ref{single}) with $m\in \mathbb{N}$ and $N\ge 2$ and showed a small data blow-up result for suitable non-negative compactly supported smooth data $f,g\ge 0$ in the subcritical case $\rho\in (1,\rho_S)$, where $\rho_S=\rho_S(N,m)$ is called a critical exponent defined as the positive root of the quadratic equation of $\rho$,
\begin{equation}
\label{criticalexp}
\gamma_S(N,m,\rho):=-\left\{(m+2)\frac{N}{2}-1\right\}\rho^2-\left\{-(m+2)\frac{N}{2}+m-1\right\}\rho+(m+2)=0.
\end{equation}
We note that in the case of $m=0$, the quadratic polynomial $\gamma_S(N,0,\rho)$ is
\[
\gamma_S(N,0,\rho):=-(N-1)\rho^2+(N+1)\rho+2
\]
and the exponent $\rho_{S}(N,0)$ is
\[
    \rho_{S}(N,0)=
    \left\{\begin{array}{cc}
    \displaystyle{\frac{N+1+\sqrt{N^2+10N-7}}{2(N-1)}},& \text{if}\ N\ge 2,\\
    \infty,& \text{if}\ N=1,
    \end{array}\right.
\]
which is known as the Strauss exponent, and divides small data blow-up and small data global existence to the Cauchy problem (\ref{single}) with $m=0$. They \cite[Theorem 1.2]{He-Witt-Yin17} also proved small data global existence to the problem (\ref{single}) with $m\in \mathbb{N}$ for initial data $(f,g)\in H^s(\mathbb{R}^N)\times H^{s-\frac{2}{m+2}}(\mathbb{R}^N)$ with the minimal regularity $s=\frac{N}{2}-\frac{4}{(m+2)(\rho-1)}$ in the case where $\rho\in\left[ \rho_{\text{conf}},\infty\right]$ for $N=2$ or $\rho\in\left[\rho_{\text{conf}},\frac{(m+2)(n-2)+6}{(m+2)(n-2)-2}\right]$ for $N\ge 3$. Here $\rho_{\text{conf}}$ is called a conformal exponent given by $\rho_{\text{conf}}=\rho_{\text{conf}}(N,m):=\frac{(m+2)N+6}{(m+2)N-2}$. They \cite[Theorem 1.1]{He-Witt-Yinpe} proved small data global existence to the problem (\ref{single}) with $N\ge 3$ and $m\in \N$ and with smooth compactly supported data $f,g\in C_0^{\infty}(\R^N)$ in the supercritical case $\rho\in (\rho_{S},\rho_{\text{conf}})$. They \cite[Theorem 1.1]{He-Witt-Yin17-2} showed small data blow-up to the problem (\ref{single}) with $N\ge 2$ and $m\in \N$ and with non-negative smooth compactly supported data $f,g\in C_0^{\infty}(\R^N)$ in the critical case $\rho=\rho_{S}$ and they also showed small data global existence to (\ref{single}) with $N=2$ and $m\in \N$ and with low regular and weighted data in the supercritical case $\rho\in (\rho_S,\rho_{\text{conf}})$ (see \cite[Theorem 1.1]{He-Witt-Yin17-2}). The second and third authors \cite[Theorems 1.2 and 1.3]{LTpre} proved small data blow-up and an upper estimate of lifespan to the problem (\ref{single}) with $m\ge 0$ and $N\in\N$ and with suitable non-negative smooth compactly supported data $f,g\in C_0^{\infty}(\R^N)$ in the subcritical and critical cases $\rho\in (1,\rho_S]$, that is, the following estimate
\[
  T(\varepsilon)\le \left\{
  \begin{array}{cl}
  C \varepsilon^{-\frac {2\rho(\rho-1)}{\gamma_S(N,m,\rho)}},& \text{if}\ 1<\rho<\rho_{S},\\
\exp(C \varepsilon^{-\rho(\rho-1)}),& \text{if}\ \rho=\rho_{S},
\end{array}\right.
\]
holds for any $\varepsilon\in (0,\varepsilon_0]$, where $\varepsilon_0$ and $C$ are positive constants independent of $\varepsilon$. From the above results, we see that the exponent $\rho_S(N,m)$ at least in the case $m\in \Z_{\ge 0}$ is a critical exponent dividing global existence and blow-up to (\ref{single}) for small data, that is, the following statement holds:
\begin{equation}
\label{critical}
  \left\{
  \begin{array}{l}
  \text{if}\ \rho>\rho_S(N,m),\ \text{the solution exists globally in time for small data},\\
  \text{if}\ \rho<\rho_S(N,m),\ \text{the solution blows up in a finite time even for small data}.
\end{array}\right.
\end{equation}
The problem to determine the critical exponent to the semilinear classical ($m=0$) wave equation
\begin{equation}
\label{classi}
     \partial_t^2w(x,t)-\Delta w(x,t)=|w(x,t)|^{\rho},\ \ \ (x,t)\in \R^N\times[0,T),
\end{equation}
is well known as the Strauss conjecture and was solved (see \cite{YZ06, Z07} and their references). Also, roughly speaking, the sharp lower and upper bounds of lifespan in the case $N\ge 2$ to (\ref{classi}) were proved as
$$
T(\varepsilon)\sim \left\{
\begin{array}{ll}
C\varepsilon^{-\frac{p-1}{\gamma_G(N,0,\rho)}},&\quad \mbox{if}\ N=1\ \text{or}\ \rho\in (1,2),\\
Ca(\varepsilon),&\quad\mbox{if}\ N=\rho=2,\\
C \varepsilon^{-\frac {2\rho(\rho-1)}{\gamma_S(N,0,\rho)}},&\quad \mbox{if}\ N=2\ \text{and}\ \rho\in (2,\rho_S(N,0))\ \text{or}\ N\ge 3\ \text{and}\ \rho\in (1,\rho_S(N,0)),\\
\exp(C \varepsilon^{-\rho(\rho-1)}),&\quad \mbox{if}\ \rho=\rho_S(N,0),
\end{array}
\right.
$$
where $C>0$ is a positive constant independent of $\varepsilon$ and $a=a(\varepsilon)$ satisfies $\varepsilon^2a^2\log(1+a)=1$ (see \cite{TW11, ZH14, ISW19} and their references). Here for $N\in \N$ and $m\ge 0$, $\gamma_G(N,m,\rho)$ is defined as the following monomial of $\rho\ge 1$
\[
\gamma_{G}(N,m,\rho):=2-\left(\frac{m+2}{2}N-1\right)(\rho-1)
\]
and the root of the equation $\gamma_G(N,0,\rho)=0
$, that is, $\rho_G=\rho_G(N,0):=1+\frac{2}{N-1}$ is called the Glassey exponent in the research field of wave equations (see \cite{HWY12}). Our approach of this paper for the proof of our main result (Theorem \ref{Main}) is applicable to the single equation (\ref{single}) and can extend the result of the upper estimate to more general $m\ge 0$ except for the case $N=\rho=2$. Very recently, small data blow-up and upper estimate of lifespan for the generalized Tricomi equation (\ref{classi}) with a replacement of $|u|$ into the derivative nonlinearity $|\partial_tu|^p$ were studied in \cite{LPpre,LSpre}.

Determining the critical $(p.q)$-curve for the weakly coupled system of the classical semilinear wave equations was also studied in the case $N\ge 2$
\begin{equation}\label{sys2}
\left\{
\begin{array}{ll}
\partial_t^2u(x,t)-c_1\Delta u(x,t)=|v(x,t)|^p,\ &(x,t)\ \in\ \R^N\times[0,T),\\
\partial_t^2v(x,t)-c_2\Delta v(x,t)=|u(x,t)|^q,\ &(x,t)\ \in\ \R^N\times[0,T),
\end{array}
\right.
\end{equation}
where $c_1>0$ and $c_2>0$ are positive constants and denote the propagation speeds. In the case of $c_1=c_2$, the following statement is proved in \cite{AKH00,D99,GTZ,KO99,Kurokawa,KT,ISW19}:
\[
  \left\{
  \begin{array}{l}
  \text{if}\ \Gamma_{SS}(N,0,p,q)>0,\ \text{the solution exists globally in time for small data},\\
  \text{if}\ \Gamma_{SS}(N,0,p,q)<0,\ \text{the solution blows up in a finite time even for small data}.
\end{array}\right.
\]
Here the closed set $\left\{(p,q)\in (1,\infty)^2\ |\ \Gamma_{SS}(N,m,p,q)=0\right\}$ is called a critical $(p,q)$-curve. Also, the sharp lower and upper estimates of lifespan are shown as
$$
T(\varepsilon)\sim \left\{
\begin{array}{ll}
C \varepsilon^{-\Gamma_{SS}(N,0,p,q)^{-1}},&\quad \mbox{if}\ \  \Gamma_{SS}(N,0,p,q)>0,\\
\exp \left(C \varepsilon^{-\min\{p(pq-1),q(pq-1)\}}\right),&\quad \mbox{if}\ \  \Gamma_{SS}(N,0,p,q)=0,\ p\neq q,\\
\exp \left(C \varepsilon^{-p(p-1)}\right),&\quad \mbox{if}\ \  \Gamma_{SS}(N,0,p,q)=0,\ p=q,\\
\end{array}
\right.
$$
for any $\varepsilon\in (0,\varepsilon_0]$, where $\varepsilon_0$ and $C$ are positive constants independent of $\varepsilon$. See \cite{DGM97} for a hyperbolic system of Hamiltonian type. We emphasize that our main result extends this upper estimate to more general case $m\ge 0$ and also treat the one spatial dimension $N=1$. For a strongly coupled system of semilinear wave equations with unequal propagation speeds, Kubo and Ohta \cite{KO05,KO06} derived sharp conditions for the small data global existence and blowup, and obtained the estimate of the lifespan for the blowup solution in two and three space dimensions. And Zha and Zhou \cite{ZZ} considered the system of quasilinear wave equations with multiple propagation speeds in $\R^4$, and got the sharp lower bound of the lifespan.

\subsection{Main result}
\ \ In this subsection, we state our main small data blow-up results in the present paper. To do so, we introduce several terminologies and notation. First we give the definition of the weak solution to the Cauchy problem (\ref{sys}).
\begin{definition}[Weak solution]
Let $N\in \N$, $m_1,m_2\ge 0$, $f_1,f_2 \in H^1(\R^N)$, $g_1,g_2 \in L^2(\R^N)$, $\varepsilon>0$, $G_1,G_2:\mathbb{R}\rightarrow\mathbb{R}_{\ge 0}$ be functions satisfying the assumption (\ref{nonlinearity}) and $T>0$. A pair of functions
$$
(u,v)\in L^{q}_{loc}\left(\R^N\times (0,T)\right)\times L^{p}_{loc}\left(\R^N\times (0,T)\right)
$$
is called a (weak) solution to the problem \eqref{sys} on $[0,T)$, if the identities $(u,v)(0)=(\varepsilon f_1,\varepsilon f_2),\ \partial_t(u,v)(0)=(\varepsilon g_1,\varepsilon g_2)$ hold and for any test function $\Phi \in C_0^\infty(\R^N\times[0,T))$, the equations
\begin{equation}\label{weaksol}
\begin{aligned}
&\varepsilon\int_{\R^N}g_1\Phi(0)dx+\int_0^T\int_{\R^N}G_1(v(t))\Phi(x,t)dxdt\\
&\quad\quad\quad\le -\int_0^T\int_{\R^N} \partial_tu(t) \partial_t\Phi(x,t) dxdt+\int_0^T\int_{\R^N}t^{m_1} \nabla u(t)\cdot \nabla \Phi(x,t) dxdt,\\
&\varepsilon\int_{\R^N}g_2\Phi(0)dx+\int_0^T\int_{\R^N}G_2(u(t))\Phi(x,t)dxdt\\
&\quad\quad\quad\le-\int_0^T\int_{\R^N} \partial_tv(t)\partial_t\Phi(x,t) dxdt+\int_0^T\int_{\R^N}t^{m_2} \nabla v(t)\cdot \nabla \Phi(x,t)dxdt\\
\end{aligned}
\end{equation}
hold.
\end{definition}

\begin{remark}[Local existence of the weak solution]
 By similar argument as in \cite{D}, we could get a unique solution $(u,v)\in (C([0,T),H^{s_0}(\R^N)))^2$, with the property of finite propagation speed, to the Cauchy problem (\ref{sys}) with the initial data $f_1, f_2, g_1, g_2 \in H^{s_0+2}(\R^N))$ for $s_0>2$. In this case, we could obtain the local existence of the weak solution.
\end{remark}

Next we give the definition of lifespan $T(\varepsilon)$ of the solution to the problem (\ref{sys}).
\begin{definition}[Lifespan]
\label{Life}
We call the maximal existence time of the weak solution $(u,v)$ to the Cauchy problem (\ref{sys}) with the initial data $(u,v)(0)=(\varepsilon f_1,\varepsilon f_2),\ \partial_t(u,v)(0)=(\varepsilon g_1,\varepsilon g_2)$ to be lifespan, which is denoted by $T(\varepsilon)=T\left(\varepsilon, (f_1,g_1), (f_2,g_2)\right)$, that is
\[
   T(\varepsilon):=\sup\left\{T\in (0,\infty]\ |\ \text{there exists a weak solution}\ (u,v)\ \text{to (\ref{sys})}\ \text{on}\ [0,T)\right\}.
\]
\end{definition}
For $m\ge 0$, we set $\gamma=\gamma(m):=\frac {m+2}2\ge 1$. And for $N\in \N$ and $m\ge 0$, we introduce three rational functions $F_{SS}=F_{SS}(N,m,p,q)$, $\Gamma_{SS}=\Gamma_{SS}(N,m,p,q)$ and $F_{GG}=F_{GG}(N,m,p,q)$ of $p,q\ge 1$ given by
\begin{align*}
F_{SS}(N,m,p,q):&=-\frac{(m+2)(N-1)}{4}-\frac{m}{4}-\frac{m}{2q}+(pq-1)^{-1}\left(p+2+\frac 1q\right),\\
\Gamma_{SS}(N,m,p,q):&=\max\{F_{SS}(N,m,p,q),F_{SS}(N,m,q,p)\}\\
F_{GG}(N,m,p,q):&=\frac{2(p+1)}{pq-1}-(\gamma(m)N-1)\\
\Gamma_{GG}(N,m,p,q):&=\max\{F_{GG}(N,m,p,q),F_{GG}(N,m,q,p)\}
\end{align*}
For $m_1,m_2\ge 0$ with $m_1\ne m_2$, we also introduce a rational function $\Omega_{SS}=\Omega_{SS}(N,m_1,m_2,p,q)$ and $\Omega_{GG}=\Omega_{GG}(N,m_1,m_2,p,q)$
of $p,q\ge 1$ given by
\begin{align}
\label{criti}
\Omega_{SS}(N,m_1,m_2,p,q)&:=
\begin{cases}
     F_{SS}(N,m_1,p,q),&\ m_1>m_2,\\
     F_{SS}(N,m_2,q,p),&\ m_1<m_2,
\end{cases}\\
\Omega_{GG}(N,m_1,m_2,p,q):&=
\begin{cases}
     F_{GG}(N,m_1,p,q),&\ m_1\ge m_2,\\
     F_{GG}(N,m_2,p,q),&\ m_1<m_2,
\end{cases}
\end{align}
We introduce a functional $I:L^1(\R^N)\rightarrow \R$ given by
$$I[g]:=\int_{\R^N}g(x)dx.$$
For $m_1,m_2\ge 0$ and $T>0$, we define a function $\phi:(0,T)\rightarrow \R_{\ge 0}$ given by
\begin{equation}
\label{phi}
\phi(t)=\phi(m_1,m_2;t):=\max\left(\frac{1}{\gamma(m_1)} t^{\gamma(m_1)}, \frac{1}{\gamma(m_2)} t^{\gamma(m_2)}\right).
\end{equation}

Now we state our first main result, which means small data blow-up and an upper bound of the lifespan to the problem (\ref{sys}) with different propagation speeds $m_1\neq m_2$ in the critical and subcritical cases $\Omega_{SS}(N,m_1,m_2,p,q)\ge 0$:
\begin{theorem}[Small data blow-up and upper estimate of the lifespan for $m_1\ne m_2$]
\label{Main}
Let $N\in \N$, $m_1,m_2\ge 0$ with $m_1\ne m_2$, $p,q>1$ satisfy $\Omega_{SS}(N,m_1,m_2,p,q)\geq 0$ and $G_1,G_2:\mathbb{R}\rightarrow\mathbb{R}_{\ge 0}$ be functions satisfying (\ref{nonlinearity}). Let $f_1,f_2 \in H^1(\R^N)$, $g_1,g_2 \in L^2(\R^N)$ satisfy the following estimates
$$I[g_1]>0,\quad \mbox{and}\quad I[g_2]>0.$$
Let $\varepsilon>0$. We assume that a pair of functions $(u,v)$ is a weak solution of the problem \eqref{sys} on $[0,T)$ with the finite propagation property, that is,
\begin{equation}
\label{FPS}
\supp(u,v)\subset \left\{(x,t)\in \R^N\times [0,T)\ \bigg|\ |x|\leq r_0+\phi(t)\right\},
\end{equation}
where $r_0:=\sup \left\{|x|\in \R_{\ge 0}\ |\ x\in \supp(f_1,f_2,g_1,g_2)\right\}>0$ and $\phi=\phi(t)$ is given by (\ref{phi}). Then the lifespan $T(\varepsilon)<\infty$. Moreover there exist positive constants $A=A(N,m_1,m_2,p,q,g_1,g_2)>0$ and $\varepsilon_0=\varepsilon_0(N,m_1,m_2,p,q,g_1,g_2)\in (0,1]$ independent of $\varepsilon$ such that for any $\varepsilon\in (0,\varepsilon_0]$, the estimate
\begin{equation}
\label{upper}
T(\varepsilon)\leq \left\{
\begin{array}{ll}
A \varepsilon^{-\Omega_{GG}(N,m_1,m_2,p,q)^{-1}},&\quad \mbox{if}\ \  \Omega_{GG}(N,m_1,m_2,p,q)>0,\\
A \varepsilon^{-\Omega_{SS}(N,m_1,m_2,p,q)^{-1}},&\quad \mbox{if}\ \  \Omega_{SS}(N,m_1,m_2,p,q)>0,\\
\exp (A \varepsilon^{-q(pq-1)}),&\quad \mbox{if}\ m_1>m_2,\ \Omega_{SS}(N,m_1,m_2,p,q)=0, \text{and}\ N\geq 2,\\
\exp (A \varepsilon^{-p(pq-1)}),&\quad \mbox{if}\ m_1<m_2,\ \Omega_{SS}(N,m_1,m_2,p,q)=0, \text{and}\  N\geq 2,\\
\end{array}
\right.
\end{equation}
holds.
\end{theorem}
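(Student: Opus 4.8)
The plan is to adapt the test-function method of \cite{ISW19} to the generalized Tricomi setting, producing two independent blow-up mechanisms --- a Glassey-type one governed by $\Omega_{GG}$ and a Strauss-type one governed by $\Omega_{SS}$ --- each valid on the parameter range indicated in (\ref{upper}). Throughout I may assume without loss of generality that $m_1>m_2$; the case $m_1<m_2$ follows by interchanging the roles of $(u,m_1,p,a)$ and $(v,m_2,q,b)$, which is exactly the symmetry encoded in the definitions (\ref{criti}) of $\Omega_{SS}$ and $\Omega_{GG}$. Under this convention the dominant speed is $m_1$, so by (\ref{phi})--(\ref{FPS}) the support radius is $\sim t^{\gamma(m_1)}$ for large $t$ and the relevant spatial ball has volume comparable to $t^{\gamma(m_1)N}$.

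First I would establish the Glassey-type bound. Taking in the weak formulation (\ref{weaksol}) a test function equal to $1$ on the support set (\ref{FPS}) and setting the zeroth moments $U_0(t):=\int_{\R^N}u(x,t)\,dx$ and $V_0(t):=\int_{\R^N}v(x,t)\,dx$, the nonnegativity of $G_1,G_2$ from (\ref{nonlinearity}) gives $U_0''(t)\ge a\int|v|^p\,dx$ and $V_0''(t)\ge b\int|u|^q\,dx$. Since $U_0'(0)=\ep\,I[g_1]>0$ and $U_0''\ge 0$, we obtain $U_0(t)\ge c\,\ep\,t$ for large $t$, and likewise for $V_0$, so both moments stay positive. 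Hölder's inequality on the support ball of volume $\sim t^{\gamma(m_1)N}$ then yields the closed system
\begin{equation*}
U_0''(t)\gtrsim t^{-\gamma(m_1)N(p-1)}\,V_0(t)^p,\qquad V_0''(t)\gtrsim t^{-\gamma(m_1)N(q-1)}\,U_0(t)^q,
\end{equation*}
and a Kato-type iteration bootstrapping the growth exponents through the two inequalities forces finite-time blow-up, with $T(\ep)\le A\,\ep^{-\Omega_{GG}^{-1}}$ whenever $\Omega_{GG}>0$.

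For the Strauss-type bounds I would use the special free solutions (\ref{testsub}) and (\ref{testcri}) together with their positivity and integral estimates from Lemma \ref{propy} and Proposition \ref{propertest}. Writing $\psi$ for the test function solving $\pa_t^2\psi-t^{m_1}\Delta\psi=0$ and pairing the first equation of (\ref{sys}) with $\psi$, an integration by parts in $x$ together with the identity $t^{m_1}\Delta\psi=\pa_t^2\psi$ collapses the elliptic terms and leaves
\begin{equation*}
\frac{d}{dt}\int_{\R^N}\big[(\pa_t u)\psi-u\,\pa_t\psi\big]\,dx=\int_{\R^N}G_1(v)\,\psi\,dx\ge a\int_{\R^N}|v|^p\psi\,dx.
\end{equation*}
Integrating in time and using $I[g_1]>0$ to control the boundary contribution gives a lower bound for the weighted functional $\int u\psi\,dx$; the symmetric computation for the second equation, combined with the weighted H\"older estimates $\int_{|x|\le r_0+\phi(t)}\psi^{p'}\,dx\lesssim t^{\alpha}$ of Proposition \ref{propertest}, produces a closed coupled pair of integral inequalities. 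In the subcritical range $\Omega_{SS}>0$ the polynomial-weight functions (\ref{testsub}) let the iteration close in finitely many steps, giving $T(\ep)\le A\,\ep^{-\Omega_{SS}^{-1}}$; at criticality $\Omega_{SS}=0$ one uses (\ref{testcri}) instead, whose integral estimates carry a borderline logarithmic factor, so the iteration accumulates one logarithm per stage and yields $\exp(A\,\ep^{-q(pq-1)})$ when $m_1>m_2$ and $\exp(A\,\ep^{-p(pq-1)})$ when $m_1<m_2$.

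The main obstacle is precisely the mismatch of propagation speeds $m_1\ne m_2$. Because the two components live on light cones of different widths, one cannot use a single self-similar test function for both equations: the dominant speed $\max(m_1,m_2)$ fixes the support (\ref{phi}) and hence the powers in the H\"older step, while the test function tied to the slower equation must be chosen with compatible time-decay so that the coupled inequalities still close with the asymmetric exponents of (\ref{criti}). Verifying that the special solutions (\ref{testsub})--(\ref{testcri}) have exactly the decay and integral asymptotics needed to make the Strauss iteration close at $\Omega_{SS}=0$ --- and in particular to reproduce the precise powers $q(pq-1)$ and $p(pq-1)$ appearing in (\ref{upper}) --- is the technically delicate point, and is exactly what the properties in Lemma \ref{propy} and Proposition \ref{propertest} are designed to supply.
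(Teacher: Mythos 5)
Your Glassey-type argument (zeroth moments $U_0,V_0$, convexity, H\"older on the ball of volume $\sim t^{\gamma(m_1)N}$, Kato iteration) is a legitimate alternative to what the paper does; the paper instead gets the same bound directly from the weak formulation with the flat cut-off $\Phi=\chi(x)\{\eta_R(t)\}^{k}$, combining the two resulting inequalities (\ref{basicest0})--(\ref{3-12}) by Young's inequality into (\ref{basicest2})--(\ref{basicest3}), which immediately yields $\varepsilon I[g_i]\le CR^{-F_{GG}}$. Either route works for the first line of (\ref{upper}).

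The genuine gap is in your Strauss-type cases. You propose a ``symmetric computation for the second equation,'' i.e.\ pairing $\partial_t^2v-t^{m_2}\Delta v=G_2(u)$ with a free solution of $\partial_t^2\psi-t^{m_2}\Delta\psi=0$, and you correctly flag the speed mismatch as the delicate point --- but you offer no mechanism to overcome it, and the symmetric route in fact fails. When $m_1>m_2$ the support of $v$ is governed by the \emph{faster} speed, $|x|\le r_0+\gamma(m_1)^{-1}t^{\gamma(m_1)}$, while the $m_2$-adapted test function behaves like $\varphi_\lambda(x)\,e^{-\lambda t^{\gamma(m_2)}/\gamma(m_2)}\sim e^{\lambda(|x|-t^{\gamma(m_2)}/\gamma(m_2))}$ up to polynomial factors; on the outer part of the support this is exponentially large, so the weighted H\"older integral $\int_{|x|\le r_0+\phi(t)}\psi^{q'}dx$ is \emph{not} polynomially bounded and the coupled inequalities do not close. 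The paper's resolution is structurally asymmetric: the Strauss-type test function $w_{\lambda_0,m_1}\chi\eta_R^k$ is applied \emph{only} to the equation with the dominant speed $m_1$, producing (\ref{equ3}), and the loop is closed using the \emph{Glassey-type} (flat cut-off) estimate (\ref{3-15-a}) for the other component; the identity $-\tfrac1qF_{GG}(N,m_1,q,p)+\gamma_l(N,m_1,q)=-F_{SS}(N,m_1,p,q)$ then gives $\varepsilon\lesssim R^{-F_{SS}}$. This is precisely why $\Omega_{SS}$ is the single quantity $F_{SS}(N,m_1,p,q)$ rather than a maximum, and why the critical exponent is $q(pq-1)$ (not a minimum) when $m_1>m_2$. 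In the critical case the paper also does not run an ``iteration accumulating logarithms'': it feeds the functional $Y_2(R)=Y[G_1(v)W_{\beta,m_1}](R)$, with $\beta=\tfrac N2-\tfrac1{m_1+2}-\tfrac1q$ and the estimates of Proposition \ref{propertest}, into the ODE Lemma 3.10 of \cite{ISW19}. Without the asymmetric closure your outline cannot produce the second, third, or fourth lines of (\ref{upper}).
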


Next we study the problem (\ref{sys}) with the same propagation speed $m_1=m_2=:m>0$. In this case, we can prove a better upper estimate of the lifespan than Theorem \ref{Main} in the critical and subcritical cases $\Gamma_{SS}(N,m,p,q)\ge 0$:
\begin{theorem}[Small data blow-up and upper estimate of the lifespan for $m_1=m_2$]
\label{Main2}
Besides the same assumptions except for $m_1\ne m_2$ of Theorem \ref{Main}, we assume that $m_1=m_2=:m>0$. Then the lifespan is finite, i.e. $T(\varepsilon)<\infty$. Moreover there exist positive constants $B=B(N,m,p,q,g_1,g_2)>0$ and $\varepsilon_0=\varepsilon_0(N,m,p,q,g_1,g_2)\in (0,1]$ independent of $\varepsilon$ such that for any $\varepsilon\in (0,\varepsilon_0]$, the estimate
\begin{equation}
\label{upper1}
T(\varepsilon)\leq \left\{
\begin{array}{ll}
B \varepsilon^{-\Gamma_{SS}(N,m,p,q)^{-1}},&\quad \mbox{if}\ \  \Gamma_{SS}(N,m,p,q)>0,\\
\exp (B \varepsilon^{-\min\{p(pq-1),q(pq-1)\}}),&\quad \mbox{if}\ \  \Gamma_{SS}(N,m,p,q)=0,\ p\neq q, \mbox{and}\ N\geq 2,\\
\exp (B \varepsilon^{-p(p-1)}),&\quad \mbox{if}\ \  \Gamma_{SS}(N,m,p,q)=0,\ p=q, \mbox{and}\  N\geq 2,\\
\end{array}
\right.
\end{equation}
holds.
\end{theorem}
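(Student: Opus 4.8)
The plan is to adapt the test-function and iteration framework of \cite{ISW19} to the equal-speed Tricomi system, using the decisive simplification that when $m_1=m_2=m$ both equations are governed by the single operator $\partial_t^2-t^m\Delta$, so one common family of free solutions serves as test functions for both components; this is exactly what upgrades the combined bound of Theorem \ref{Main} to the sharp Strauss bound governed by $\Gamma_{SS}$.

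The starting point is the pairing identity obtained from the weak formulation \eqref{weaksol}. Taking any nonnegative $\psi$ with $\partial_t^2\psi-t^m\Delta\psi=0$, integrating over $[0,t]\times\R^N$, and using the finite-propagation support \eqref{FPS} together with $\partial_t^2\psi=t^m\Delta\psi$ to discard spatial boundary terms, one obtains
$$\int_{\R^N}\bigl(\partial_t u\,\psi-u\,\partial_t\psi\bigr)\,dx=\varepsilon\!\int_{\R^N}\!\bigl(g_1\psi(0)-f_1\partial_t\psi(0)\bigr)dx+\int_0^t\!\!\int_{\R^N}G_1(v)\,\psi\,dx\,ds,$$
and the analogue for $v$ with $G_2$. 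Two specializations drive the argument. With $\psi\equiv1$ (a free solution, admissible since $u,v$ are compactly supported in $x$) and $U(t):=\int u\,dx$, $V(t):=\int v\,dx$, this gives $U''\ge a\int|v|^p\,dx$ and $V''\ge b\int|u|^q\,dx$, while $I[g_1],I[g_2]>0$ force $U,V$ to be positive and increasing; Hölder's inequality over the support ball $|x|\le r_0+\phi(t)$, of volume $O(t^{\gamma N})$, then yields the polynomial seed bounds $U''\gtrsim t^{-\gamma N(p-1)}V^p$ and $V''\gtrsim t^{-\gamma N(q-1)}U^q$ that initiate the iteration.

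The Strauss weights come from the second specialization, $\psi=\Psi$, the subcritical test function \eqref{testsub} of the form $\lambda(x)T(t)$ (Proposition \ref{propertest}), where $\Delta\lambda=\lambda$ (so $\lambda(x)\sim|x|^{-(N-1)/2}e^{|x|}$) and $T''=t^mT$ with the decaying branch $T(t)\sim t^{-m/4}e^{-\phi(t)}$. Setting $\mathcal V(t):=\int v\,\Psi\,dx$ and applying Hölder with the weight $\Psi$ gives $\int|v|^p\Psi\,dx\ge \mathcal V^p\big(\int_{\mathrm{supp}}\Psi\,dx\big)^{-(p-1)}$, and the crucial feature—provided by Lemma \ref{propy} and Proposition \ref{propertest}—is that the exponential $e^{|x|}$ in $\lambda$ and the factor $e^{-\phi(t)}$ in $T$ cancel over the support ball, leaving only the polynomial $t$-weights whose exponents are exactly those appearing in $F_{SS}(N,m,p,q)$ and $F_{SS}(N,m,q,p)$. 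Feeding the seed lower bounds on $U,V$ into a Kato-type iteration of the resulting coupled integral inequalities, and tracking the $\varepsilon$-dependence, produces blow-up with the subcritical lifespan bound $T(\varepsilon)\le B\varepsilon^{-\Gamma_{SS}^{-1}}$; the two choices of which component to estimate first account for the $\max$ defining $\Gamma_{SS}$.

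The critical case $\Gamma_{SS}=0$ is the main obstacle, since there the polynomial iteration only reproduces a bounded sequence. To recover blow-up I would replace $\Psi$ by the one-parameter family \eqref{testcri} and integrate the associated inequalities against the parameter, manufacturing a logarithmic gain by a slicing argument as in \cite{ISW19}; iterating this logarithmic factor converts the borderline growth into blow-up and yields the exponential bounds. The asymmetry between $p$ and $q$ enters here: for $p\ne q$ the two coupled logarithmic inequalities degenerate at different rates and one is led to $\exp(B\varepsilon^{-\min\{p(pq-1),q(pq-1)\}})$, whereas for $p=q$ the symmetric closure gains an extra factor and improves the exponent to $p(p-1)$. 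The restriction $N\ge2$ in the critical statement reflects that this logarithmic gain relies on the genuine spatial decay $|x|^{-(N-1)/2}$ of $\lambda$, which is absent when $N=1$. The hardest technical points will be matching the time-weight exponents to $\Gamma_{SS}$ exactly in the weighted estimates, and making the logarithmic slicing close in the critical case despite the $p\ne q$ asymmetry.
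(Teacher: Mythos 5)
Your overall architecture is reasonable, and your critical-case strategy for $p\neq q$ (integrating the family \eqref{testcri} over the parameter and closing a logarithmic differential inequality via the ODE lemma of \cite{ISW19}) is essentially what the paper does. Your subcritical argument takes a somewhat different route: you propose a Kato-type iteration seeded by the plain averages $U,V$ and the weighted functionals $\int v\,\Psi\,dx$, whereas the paper never iterates --- it tests once with $\Phi_{\lambda_0,m}=w_{\lambda_0,m}\chi\,\eta_R^k$ and once with $\chi\,\eta_R^k$, composes the two resulting H\"older inequalities for the pair $(G_1(v),G_2(u))$, and applies Young's inequality to land directly on $\varepsilon\lesssim R^{-F_{SS}}$. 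Both routes are viable for $\Gamma_{SS}>0$; the paper's is shorter and is what makes the critical case mesh with Lemma \ref{lemcri}.

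The genuine gap is the case $\Gamma_{SS}(N,m,p,q)=0$ with $p=q$. You assert that ``the symmetric closure gains an extra factor and improves the exponent to $p(p-1)$,'' but this is the conclusion, not a mechanism. If you simply run the coupled two-component argument with $p=q$, composing the inequality for $Y[G_1(v)W_\beta]$ with the one for $G_2(u)$ yields $Y^{pq}\le CR(\log R)^{p(q-1)}Y'$, i.e.\ $p_1=pq$, $p_2=p(q-1)+1$ in \eqref{lemma3.1}, and the ODE lemma returns $\exp\bigl(C\varepsilon^{-p(pq-1)}\bigr)=\exp\bigl(C\varepsilon^{-p(p^2-1)}\bigr)$, which is strictly weaker than the claimed $\exp\bigl(C\varepsilon^{-p(p-1)}\bigr)$. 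The paper's decisive step, which your proposal omits, is to \emph{add the two equations}: since $m_1=m_2$ the sum $z:=u+v$ satisfies a single weak inequality for $\partial_t^2-t^m\Delta$ with source $\ge a|v|^p+b|u|^p\gtrsim 2^{-p}|z|^p$ and datum $g:=g_1+g_2$ with $I[g]>0$ (see \eqref{add}--\eqref{w}). One then runs the \emph{scalar} critical argument on $z$, obtaining the single closed inequality $Y_1(R)^p\le CR(\log R)^{p-1}Y_1'(R)$, i.e.\ $p_1=p_2=p$, which is exactly what produces $\exp\bigl(C\varepsilon^{-p(p-1)}\bigr)$. Note also that this reduction is only available because $m_1=m_2$ (a single operator acts on $z$), which is why Theorem \ref{Main} does not contain the analogous improvement; your proposal's opening remark that equal speeds let one common family of test functions serve both components gestures at this, but you never use it where it actually matters.
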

Our results (Theorems \ref{Main} and \ref{Main2}) extend the previous results\cite[Theorem 1.1]{KO99}, \cite[Theorem 1]{AKH00}, \cite[Theorem 1.1]{KTW12} for the classical case $m_1=m_2=0$ to the general case $m_1,m_2\ge 0$.

To understand the region of the pairs $(p,q)$ of the blow-up, that is, $\Omega_{SS}(N,m_1,m_2,p,q)\ge 0$ with $m_1\neq m_2$ or $\Gamma_{SS}(N,m,p,q)\ge 0$ with $m_1=m_2=m$ visually, we draw two pictures:
\begin{remark}[Region of $(p,q)$ of the blow-up with $m_1> m_2$]
\label{pq2}
For a point $(p,q)$, the identity $F_{SS}(N,m_1,p,q)=0$ holds on the blue curve. The inequality $F_{SS}(N,m_1,p,q)>0$ holds in the yellow area below the curve.
\begin{center}
\includegraphics[scale=0.6]{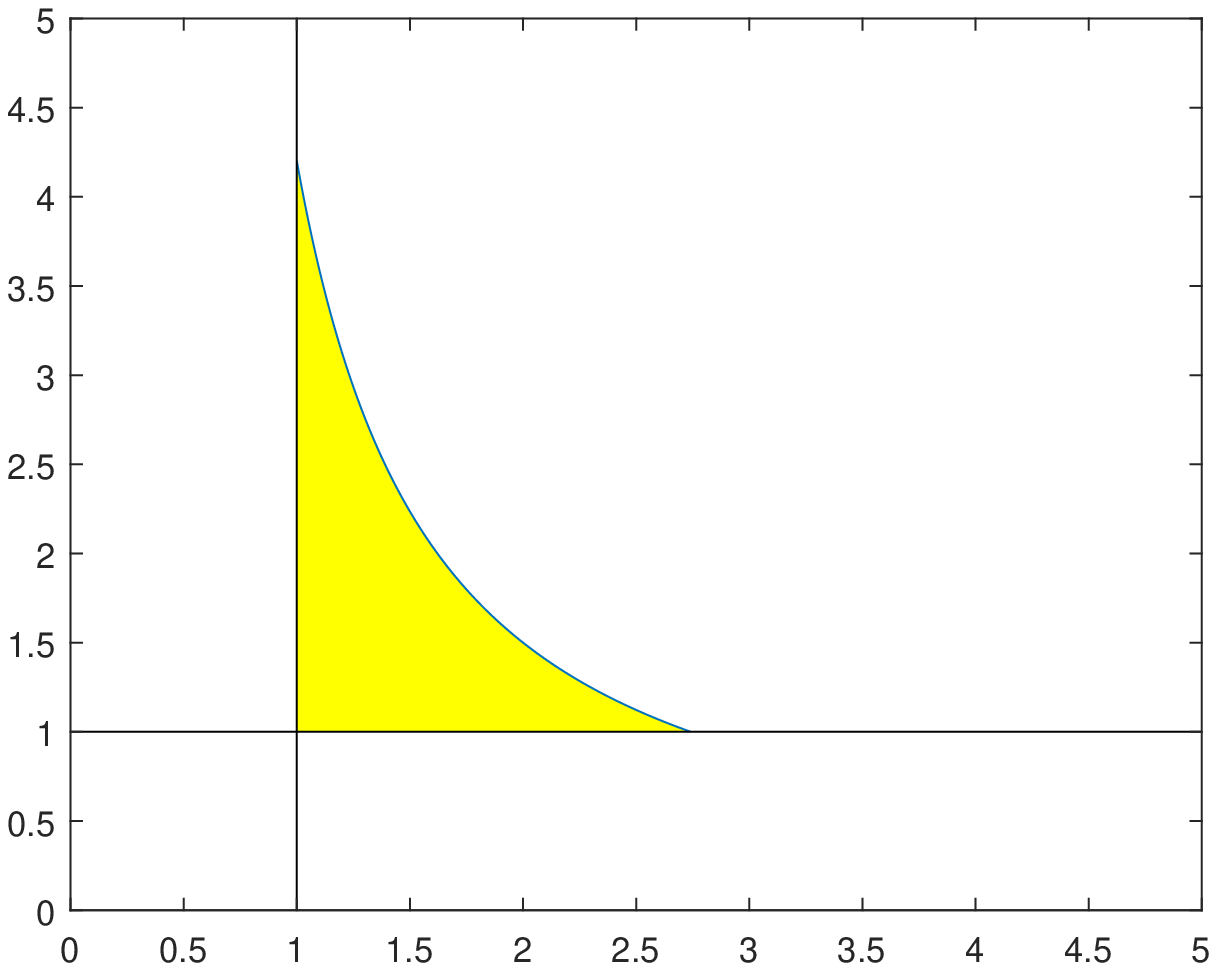}
\end{center}
\end{remark}

\begin{remark}[Region of $(p,q)$ of the blow-up with $m_1=m_2=m$]
\label{p-qcurve}
For a point $(p,q)$, the identity $F_{SS}(N,m,p,q)=0$ holds on the blue curve, while $F_{SS}(N,m,q,p)=0$ holds on the red curve. Thus $\Gamma_{SS}(N,m,p,q)>0$ holds in the yellow area below the two curves. Moreover $\Gamma_{SS}(N,m,p,q)=0$ and $p\neq q$ hold on the two curves. At the intersection point of the two curves, $\Gamma_{SS}(N,m,p,q)=0$ and $p=q$ hold.
\begin{center}
\includegraphics[scale=0.6]{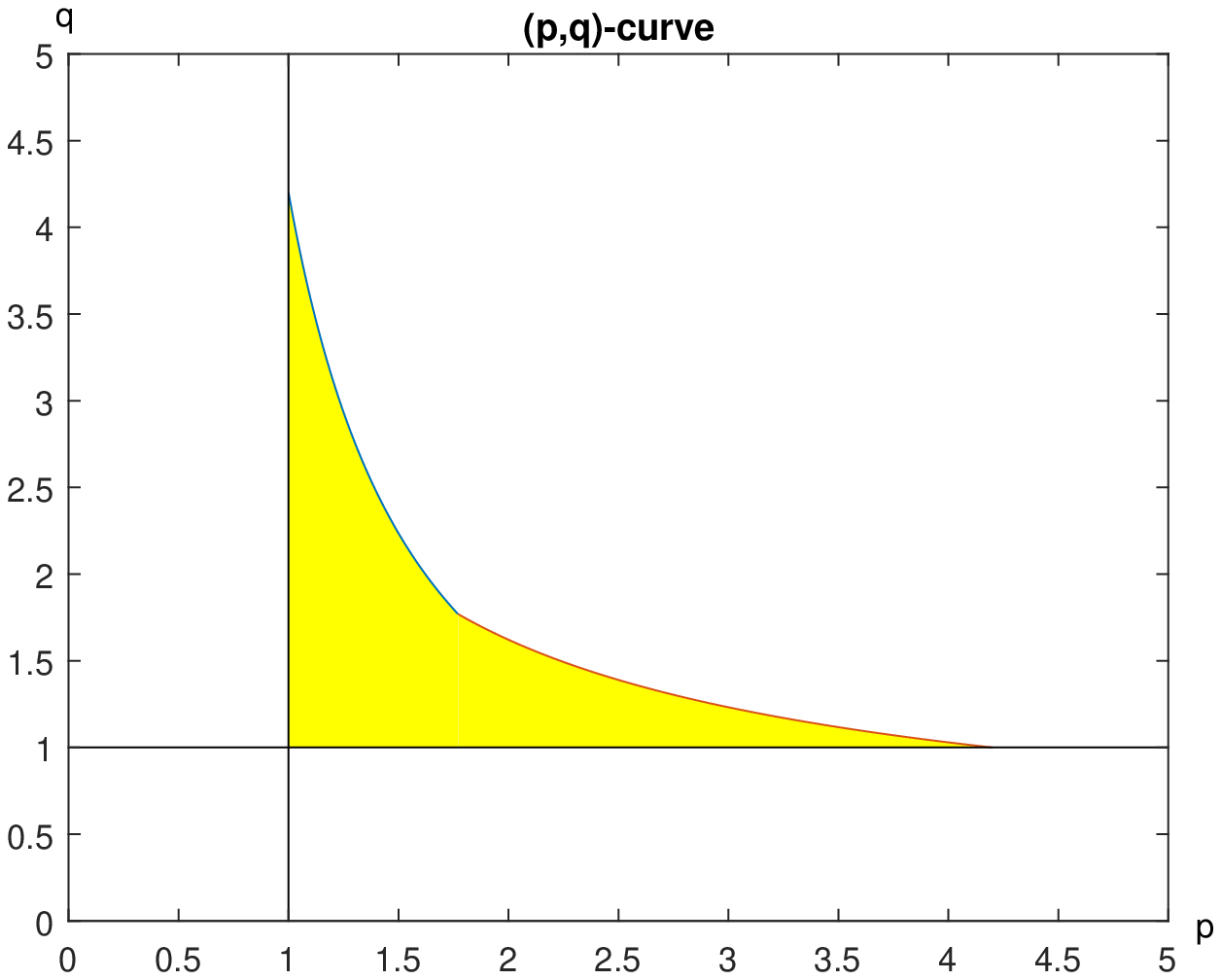}
\end{center}
\end{remark}


\subsection{Strategy of the proof}\label{strate}
\ \ We explain the strategy of the proof of Theorem \ref{Main}.
The proof is based on the framework of the argument on the test function in \cite{ISW19}, which is effective to prove small data blow-up results and the sharp upper estimates of lifespan to the classical several semilinear wave equations ($m_1=m_2=0$) in the critical and subcritical cases. To apply the method to our general case ($m_1, m_2\ge 0$), we require to construct two families of special solutions $\{w_{\lambda,m}(x,t)\}_{\lambda>0}$ (Definition \ref{specialsolsub}) and $\{W_{\beta,m}(x,t)\}_{\beta>0}$ (Definition \ref{specialsol}) to the free generalized Tricomi equation
\begin{equation}
\label{freeeq}
\partial_t^2w(x,t)-t^m\Delta w(x,t)=0,\ \ \ (x,t)\in \R^N\times \R_{\ge 0},
\end{equation}
where $m\ge 0$, and prove several estimates for the solutions $w_{\lambda,m}$ and $W_{\beta,m}$ (Lemma \ref{propy} and Proposition \ref{propertest}). The family $\{w_{\lambda,m}\}_{\lambda>0}$ is employed to treat both subcritical and critical cases $\Omega_{SS}\ge 0$ with $m\ne m_2$ and $\Gamma_{SS}\ge 0$ with $m_1=m_2$ and especially to derive a lower estimate (\ref{equ3}) of the solution to the semilinear problem (\ref{sys}).
The other family $\{W_{\beta,m}\}_{\beta>0}$ is used to deal with the only critical cases $\Omega_{SS}= 0$ with $m\ne m_2$ and $\Gamma_{SS}=0$ with $m_1=m_2$ and in particular to prove an upper estimate (\ref{3-21}) of the solution to (\ref{sys}). The special solution $w_{\lambda,m}$ is constructed of the form $w_{\lambda,m}(x,t)=y_{\lambda,m}(t)\varphi_{\lambda}(x)$, where $\varphi_{\lambda}$ is defined by (\ref{test2}) and is an eigenfunction of the Laplace operator $\Delta$ with respect to the eigenvalue $\lambda^2$. We define the function $y_{\lambda,m}$ as a special solution to the second order ordinary differential equation
$$y"-\lambda^2 t^m y = 0,\ \ \ t\ge 0,$$
and especially $y_{\lambda,m}$ is expressed in terms of the modified Bessel function of the second kind (\ref{mBessel}). Also, the other special solution $W_{\beta,m}(x,t)$ is defined in Definition \ref{specialsol}, and especially, it is parametrized by $\beta>0$ and integrated by the parameter $\lambda$ on $(0,1)$. Several estimates (Lemma \ref{propy} and Proposition \ref{propertest}) of the functions $w_{\lambda,m}$ and $W_{\beta,m}$ are proved by using properties of the function $\varphi_{\lambda}$ and the modified Bessel function.
The proof of the small data blow-up and the upper estimate of the lifespan to (\ref{sys}) is completed by combining the properties of the above test functions and Lemma 3.10 in \cite{ISW19} (see also \cite[Proposition 7.1]{ISW19}).


\section{Construction of two families of special solutions and their properties}
\label{Preli}
\ \ \ In this section, we introduce two families of special solutions $\{w_{\lambda,m}(x,t)\}_{\lambda>0}$ and $\{W_{\beta,m}(x,t)\}_{\beta>0}$ given by  (\ref{testsub}) and (\ref{testcri}) respectively, to the free generalized Tricomi equation (\ref{freeeq}) and prove their several properties (Lemma \ref{propy}, Corollary \ref{estphi} and Proposition \ref{propertest}), which are used to prove Theorems \ref{Main} and \ref{Main2}.

For $m\ge 0$ and $\lambda>0$, we consider the following second order ordinary differential equation
\begin{equation}
\label{ODE}
y"-\lambda^2 t^m y = 0,\ \ \ t\ge 0,
\end{equation}
where $y=y(t)$ is an unknown function. As shown in \cite[Section 2.1.2]{ODE}, one of the solutions to (\ref{ODE}) can be expressed as
\begin{equation}
\label{ODEsol}
y(t)=y_{\lambda}(t)=y_{\lambda,m}(t)=C_0 (\lambda^{\frac{1}{\gamma}}t)^{\frac{1}{2}}K_{\frac 1{2\gamma}}\left(\gamma^{-1}(\lambda^{\frac{1}{\gamma}} t)^\gamma\right),\ \ \ t\ge 0,
\end{equation}
where $\gamma=\gamma(m)=\frac {m+2}2$ and $C_0\in \R$ is an arbitrary number. Here for real $\nu>0$, $K_{\nu}:\R_{\ge 0}\rightarrow \R_{\ge 0}$ is the modified Bessel function of the second kind given by
\begin{equation}
\label{mBessel}
K_\nu(t):=\int_0^\infty e^{-t\cosh z}\cosh (\nu z)dz.
\end{equation}
From \cite[Page24]{E53}, it is seen that the asymptotic estimate
$K_{\nu}(t)=\sqrt{\frac{\pi}{2t}}e^{-t}\left(1+O\left(t^{-1}\right)\right)$ holds as $t\rightarrow\infty$ if $\nu>\frac{1}{2}$, which implies that there exist $C>1$ and $t_0=t_0(\nu)\ge 1$ dependent only on $\nu$ such that for any $t\in [t_0,\infty)$, the estimates \begin{equation}
\label{properK}
C^{-1}t^{-1/2}e^{-t}\le K_\nu(t)\le C t^{-1/2}e^{-t}
\end{equation}
hold. Here and in what follows, all positive constants, except for some constants with special values, are donated by $C$, which may vary from line to line.
Noting that the identity $\lim_{t\rightarrow+0}t^{\nu}K_{\nu}(t)=2^{\nu-1}\Gamma(\nu)$ holds for $\nu>0$, where $\Gamma:\R_{\ge 0}\rightarrow \R_{> 0}$ is the Gamma function given by $\Gamma(s):=\int_0^{\infty}z^{s-1}e^{-z}dz$, we choose the constant $C_0$ as $C_0=C_0(\gamma):=\gamma^{-\frac 1{2\gamma}}2^{1-\frac 1{2\gamma}}\Gamma(\frac 1{2\gamma})^{-1}$, so that $y_{\lambda}(t)$ given by (\ref{ODEsol}) satisfies
\begin{equation}
\label{propy0}
y_{\lambda}(+0):=\lim_{t\rightarrow+0}y_{\lambda}(t)=1\ \ \ \text{and}\ \ \  y_{\lambda}(\infty):=\lim_{t\rightarrow\infty}y_{\lambda}(t)=0.
\end{equation}
From a property $\frac{d}{dt}\left(t^{\nu}K_{\nu}(t)\right)=-t^{\nu}K_{\nu-1}(t)$, it can be seen that
\begin{equation}
\label{y'(0)}
y'_{1}(+0):=\lim_{t\rightarrow+0}y'_{1}(t)=:-C_0'<0,
\end{equation}
where $C_0'$ depends only on $m$.
We state several estimates of the solution $y_{\lambda}(t)$ to (\ref{ODE}).
\begin{lemma}[Properties of $y_{\lambda}$]
\label{propy}
Let $m\ge 0$, $\lambda>0$ and $y_{\lambda}=y_{\lambda,m}:\R_{\ge 0}\rightarrow \R_{>0}$ be defined by (\ref{ODEsol}). Then the following statements hold:
\begin{enumerate}
\item The functions $y_{\lambda}$ and $-y_{\lambda}'$ are decreasing on $[0,\infty)$ and the identities $\lim_{t\rightarrow \infty}y'_{\lambda}(t)=\lim_{t\rightarrow \infty}y_{\lambda}(t)=0$ hold.
    \item There exist $T_0=T_0(m)\ge 1$ and $C=C(m)>1$ such that for any $t\ge 0$ with $\lambda t^{\gamma}\ge T_0$, the estimates hold:
\begin{equation}
\label{esty}
C^{-1}(\lambda^{\frac{1}{\gamma}}t)^{-\frac{m}{4}}\exp\left(-\gamma^{-1}(\lambda^{\frac{1}{\gamma}}t)^{\gamma}\right)\le y_{\lambda}(t)\le C(\lambda^{\frac{1}{\gamma}}t)^{-\frac{m}{4}}\exp\left(-\gamma^{-1}(\lambda^{\frac{1}{\gamma}}t)^{\gamma}\right).
\end{equation}
\item There exists a constant $C=C(m)>1$ such that for any $t\ge 0$ with $\lambda^{\frac{1}{\gamma}}t\ge 1$, the estimates hold:
\begin{align}
\label{estdy}
    C^{-1}t^{\frac{m}{2}}\lambda y_{\lambda}(t)\le |\partial_ty_{\lambda}(t)|\le Ct^{\frac{m}{2}}\lambda y_{\lambda}(t).
\end{align}
Moreover there exist $T_1=T_1(m)>1$ and $C=C(m)>1$ such that for any $t\ge 0$ with $\lambda^{\frac{1}{\gamma}}t\ge T_1$, the estimates hold:
\begin{equation}
\label{estdy2}
    C^{-1}\lambda^{\frac{1}{2}+\frac{1}{m+2}}t^{\frac{m}{4}}\exp\left(-\gamma^{-1}(\lambda^{\frac{1}{\gamma}}t)^{\gamma}\right)\le |\partial_ty_{\lambda}(t)|\le C\lambda^{\frac{1}{2}+\frac{1}{m+2}}t^{\frac{m}{4}}\exp\left(-\gamma^{-1}(\lambda^{\frac{1}{\gamma}}t)^{\gamma}\right).
\end{equation}
\end{enumerate}
\end{lemma}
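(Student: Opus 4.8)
The plan is to prove the three statements in turn, using the explicit representation $y_\lambda(t)=C_0(\lambda^{1/\gamma}t)^{1/2}K_\nu(z)$ with $z=z(t):=\gamma^{-1}(\lambda^{1/\gamma}t)^\gamma=\gamma^{-1}\lambda t^\gamma$ and $\nu:=\tfrac{1}{2\gamma}=\tfrac{1}{m+2}\in(0,\tfrac12]$, together with the two-sided bound (\ref{properK}) and the recurrence $\frac{d}{dz}\bigl(z^\nu K_\nu(z)\bigr)=-z^\nu K_{\nu-1}(z)$. For statement 1 I would argue from the ODE rather than from the Bessel representation. Since $K_\nu$ is manifestly positive by (\ref{mBessel}) and $C_0>0$, we have $y_\lambda(t)>0$, whence $y_\lambda''(t)=\lambda^2t^m y_\lambda(t)\ge0$; that is, $y_\lambda$ is convex and $y_\lambda'$ is nondecreasing. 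Recalling $y_\lambda(\infty)=0$ from (\ref{propy0}), if $y_\lambda'(t_*)>0$ for some $t_*$ then $y_\lambda'\ge y_\lambda'(t_*)>0$ for $t\ge t_*$, forcing $y_\lambda\to+\infty$, a contradiction; hence $y_\lambda'\le0$. The same monotonicity together with $y_\lambda>0$ gives $\lim_{t\to\infty}y_\lambda'(t)=0$ (a strictly negative limit would push $y_\lambda$ to $-\infty$), and $y_\lambda'<0$ on $(0,\infty)$ because $y_\lambda''>0$ there forbids $y_\lambda'$ from vanishing on any interval. Finally $-y_\lambda'$ is decreasing since $(-y_\lambda')'=-\lambda^2t^m y_\lambda<0$ for $t>0$, and $\lim_{t\to\infty}y_\lambda(t)=0$ is exactly (\ref{propy0}).

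For statement 2 I would substitute (\ref{properK}) directly. Writing the prefactor as $(\lambda^{1/\gamma}t)^{1/2}=(\gamma z)^\nu$ and choosing $T_0=T_0(m)$ large enough that $\lambda t^\gamma\ge T_0$ forces $z\ge t_0(\nu)$, the bound (\ref{properK}) gives $K_\nu(z)\asymp z^{-1/2}e^{-z}$. Substituting and collecting the powers of $\lambda$ and $t$ — using $\gamma=\tfrac{m+2}{2}$, so that $\tfrac{1}{2\gamma}-\tfrac12=-\tfrac{m}{4\gamma}$ and $\tfrac12-\tfrac\gamma2=-\tfrac m4$ — the algebraic factors collapse to exactly $(\lambda^{1/\gamma}t)^{-m/4}$, and the exponential is $\exp(-\gamma^{-1}(\lambda^{1/\gamma}t)^\gamma)$, which is (\ref{esty}). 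This step is purely computational once (\ref{properK}) is in hand.

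For statement 3 I would first differentiate the representation. With $z'=\lambda t^{\gamma-1}$ and the recurrence above one gets $\partial_t y_\lambda(t)=-C_0\gamma^\nu z^\nu K_{\nu-1}(z)\,\lambda t^{\gamma-1}$, and since $K_{\nu-1}=K_{1-\nu}$ with $1-\nu\ge\tfrac12$ this reconfirms $\partial_t y_\lambda<0$ and yields, after cancelling the powers of $t$ via $\gamma-1=\tfrac m2$, the clean identity
\[
\frac{|\partial_t y_\lambda(t)|}{t^{m/2}\lambda\,y_\lambda(t)}=\frac{K_{1-\nu}(z)}{K_\nu(z)}.
\]
Estimate (\ref{estdy}) then reduces to bounding this ratio above and below by positive constants on the range $z\ge\gamma^{-1}$ (which is the meaning of $\lambda^{1/\gamma}t\ge1$): it is continuous and strictly positive on $[\gamma^{-1},\infty)$ and tends to $1$ as $z\to\infty$ by (\ref{properK}), hence is trapped between two constants depending only on $m$. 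Estimate (\ref{estdy2}) follows by combining (\ref{estdy}) with (\ref{esty}) for $\lambda^{1/\gamma}t\ge T_1$, with $T_1$ chosen so that both earlier ranges are met, and simplifying $t^{m/2}\lambda\cdot(\lambda^{1/\gamma}t)^{-m/4}=\lambda^{1/2+1/(m+2)}t^{m/4}$, using $1-\tfrac{m}{4\gamma}=\tfrac12+\tfrac{1}{m+2}$.

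The main obstacle I anticipate is the two-sided control of the Bessel ratio $K_{1-\nu}(z)/K_\nu(z)$ \emph{uniformly} in $\lambda$ and $t$. The observation that makes it work is that, after the cancellation $\gamma-1=m/2$, the ratio depends on $(\lambda,t)$ only through the single variable $z$, and the hypothesis $\lambda^{1/\gamma}t\ge1$ supplies a fixed positive lower bound $z\ge\gamma^{-1}$ keeping us away from $z=0$, where the ratio would otherwise blow up like $z^{2\nu-1}$ when $m>0$; the dependence of the constant on $m$ alone is then immediate from continuity and the limit at infinity.
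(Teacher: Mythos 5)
Your proof is correct, but for parts 1 and 3 it takes a genuinely different route from the paper. The paper reduces everything to the normalized function $y_1$ via the scaling identity $y_{\lambda}(t)=y_1(\lambda^{1/\gamma}t)$ and then \emph{cites} external results — [HL, Lemma 2.1] (and [He-Witt-Yin17, Lemma 2.2]) — both for the monotonicity statements in part 1 and for the two-sided bound $C^{-1}t^{m/2}y_1(t)\le|y_1'(t)|\le Ct^{m/2}y_1(t)$ that drives part 3. You instead prove these from scratch: part 1 by a convexity argument straight from the ODE ($y''=\lambda^2t^my\ge0$ plus positivity and $y_{\lambda}(\infty)=0$ force $y_{\lambda}'\le0$, $y_{\lambda}'\to0$, and $(-y_{\lambda}')'<0$), and part 3 by differentiating the Bessel representation with the recurrence $\frac{d}{dz}\bigl(z^{\nu}K_{\nu}(z)\bigr)=-z^{\nu}K_{\nu-1}(z)$, reducing (\ref{estdy}) to the boundedness of the single-variable ratio $K_{1-\nu}(z)/K_{\nu}(z)$ on $[\gamma^{-1},\infty)$; your observation that the hypothesis $\lambda^{1/\gamma}t\ge1$ pins $z$ away from the origin, where the ratio degenerates like $z^{2\nu-1}$, is exactly the point that makes the constant depend on $m$ alone. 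Part 2 and the derivation of (\ref{estdy2}) by combining (\ref{esty}) with (\ref{estdy}) coincide with the paper, and your exponent arithmetic ($\gamma(\nu-\tfrac12)=-\tfrac m4$, $1-\tfrac{m}{4\gamma}=\tfrac12+\tfrac1{m+2}$) checks out. The only caveat — shared with the paper itself, so not a gap in your argument — is that (\ref{properK}) is stated for $\nu>\tfrac12$ while part 2 applies it to $K_{1/(2\gamma)}$ with $1/(2\gamma)\le\tfrac12$; the asymptotic expansion of $K_{\nu}$ is of course valid for every fixed $\nu$, so this is a wording issue rather than a mathematical one. Your version has the advantage of being self-contained and of making transparent where each hypothesis ($\lambda t^{\gamma}\ge T_0$ versus $\lambda^{1/\gamma}t\ge1$) is used; the paper's version is shorter at the cost of importing the key estimates from the literature.
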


\begin{proof}[Proof of Lemma \ref{propy}]
1. From a simple computation, we see that the identity $y_{\lambda}(t)=y_1(\lambda^{\frac{1}{\gamma}}t)$ holds for any $t\in \R_{\ge 0}$. By combining this fact and \cite[Lemma 2.1]{HL}, the conclusion hold (see also \cite[Lemma 2.2]{He-Witt-Yin17}).\\
2. Set $T_0=T_0(m):=\max\left(1,\left(\gamma t_0\left(\frac{1}{2\gamma}\right)\right)^{\frac{1}{\gamma}}\right)$ and let $t\ge 0$ with $\lambda t^{\gamma}\ge T_0$. Then the inequality $\gamma^{-1}\lambda t^{\gamma}\ge t_0$ holds, which enables us to apply the function $y_{\lambda}$ to the asymptotic estimate (\ref{properK}), to obtain the estimates (\ref{esty}) with $C=C(m):=C_1\gamma^{-\frac{1}{2}}\max\big(C_0,C_0^{-1}\big)>1$.\\
3. From \cite[Lemma 2.1]{HL}, we see that there exists a constant $C=C(m)>1$ such that for any $t\in [1,\infty)$, the estimates
\begin{equation}
\label{propy1}
C^{-1} t^{\frac{m}{2}} y_1(t)\le |y_1'(t)| \le C t^{\frac{m}{2}} y_1(t)
\end{equation}
hold. By these properties, (\ref{estdy}) is proved. Set $T_1=T_1(m):=\max\big(T_0,1\big)$. By combining the estimates (\ref{esty}) and (\ref{estdy}), (\ref{estdy2}) is proved, which completes the proof of the lemma.
\end{proof}

Next we introduce a positive function $\varphi:\R^N\rightarrow \R_{> 0}$ defined by
\begin{equation}
\label{test}
\varphi(x):=
\left\{
\begin{array}{cc}
\displaystyle{\frac{1}{\left|\mathbb{S}^{N-1}\right|}\int_{\mathbb{S}^{N-1}}e^{x\cdot \omega}d\omega}, \ \ &\mbox{if}\ \ N\geq 2,\\
\displaystyle{\frac{1}{2}(e^x+e^{-x})}, \ \ &\mbox{if}\ \ N=1,
\end{array}
\right.
\end{equation}
where $\mathbb{S}^{N-1}:=\big\{x\in \mathbb{R}^N\ |\ |x|=1\big\}$ denotes the $(N-1)$-dimensional unit sphere in $\R^N$ and $\left|\mathbb{S}^{N-1}\right|$ denotes the Lebesgue measure of $\mathbb{S}^{N-1}$. In the case of $N\ge 2$, the function $\varphi$, which is a spherical average of $e^{x\cdot\omega}$ and was used to prove a small data blow-up result \cite[Theorem 1.1]{YZ06} to the semilinear classical ($m=0$) wave equation (\ref{classi}) in the critical case $\rho=\rho_c(N,0)$.

For $\lambda>0$, we define a positive function $\varphi_{\lambda}:\R^N\rightarrow\R_{> 0}$ given by
\begin{equation}
\label{test2}
\varphi_\lambda(x):=\varphi(\lambda x).
\end{equation}
Next we state several properties of the function $\varphi_{\lambda}(x)$.

\begin{lemma}[Properties of $\varphi_{\lambda}$]
\label{Property}
Let $N\in \N$, $\lambda>0$ and $\varphi_{\lambda}:\R^N\rightarrow\R_{> 0}$ be defined by (\ref{test2}). Then the following statements hold:
\begin{enumerate}
\item Let $N\geq 2$. Then for any $x\in \R^N$, the identity holds:
\begin{equation}
\label{2-2-1}
\varphi_{\lambda}(x)=\int_{-1}^1(1-\theta^2)^{\frac {N-3}2}e^{\theta \lambda |x|}d\theta .
\end{equation}
\item For any $x\in \R^N$, the equation $\Delta \varphi_{\lambda}(x)=\lambda^2\varphi(x)$, the inequalities $1\le \varphi_{\lambda}(x)\le e^{\lambda|x|}$ and the identity $\lim_{\lambda\rightarrow +0}\varphi_{\lambda}(x)=1$ hold.
\item The asymptotic estimate $\varphi_{\lambda}(x)\sim |\lambda x|^{-\frac {N-1}2}e^{\lambda|x|}$ holds as $\lambda|x|\rightarrow \infty$.
\end{enumerate}
From these estimates, there exists a constant $C\ge 1$ independent of $\lambda$ such that for any $x\in \R^N$, the estimates hold:
\begin{equation}
\label{2-11-2}
    C^{-1}(1+\lambda|x|)^{-\frac{N-1}{2}}e^{\lambda|x|}\le \varphi_{\lambda}(x)\le C(1+\lambda|x|)^{-\frac{N-1}{2}}e^{\lambda|x|}.
\end{equation}
\end{lemma}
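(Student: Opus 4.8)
The plan is to exploit the fact that $\varphi_\lambda(x)=\varphi(\lambda x)$ depends on $x$ only through the scalar $r:=\lambda|x|$, so that every assertion reduces to a statement about a single radial profile. I would run the two cases in parallel, the case $N=1$ being the elementary one in which $\varphi_\lambda(x)=\cosh(\lambda x)$, and all four claims are checked by hand.

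First, for Part 1 ($N\ge 2$) I would slice the unit sphere by the hyperplanes orthogonal to the direction $x/|x|$. Writing $\omega\cdot(x/|x|)=\theta$, the slice at height $\theta\in[-1,1]$ is an $(N-2)$-sphere of radius $\sqrt{1-\theta^2}$, and the coarea/slicing formula for the surface measure contributes the weight $(1-\theta^2)^{(N-3)/2}$ (the exponent $(N-2)/2$ from the radius raised to the power $N-2$, minus $1/2$ from $d\theta$ against meridian arc length). Rotational invariance of $d\omega$ then collapses the spherical average to $\int_{-1}^1(1-\theta^2)^{(N-3)/2}e^{\lambda|x|\theta}\,d\theta$ up to the ratio $|\mathbb{S}^{N-2}|/|\mathbb{S}^{N-1}|$, and the Beta-function identity $\frac{|\mathbb{S}^{N-2}|}{|\mathbb{S}^{N-1}|}\int_{-1}^1(1-\theta^2)^{(N-3)/2}\,d\theta=1$ fixes the normalization, consistently with $\varphi_\lambda(0)=1$.

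For Part 2 the eigenfunction identity comes from differentiating under the integral sign: since $\Delta_x e^{\lambda x\cdot\omega}=\lambda^2|\omega|^2 e^{\lambda x\cdot\omega}=\lambda^2 e^{\lambda x\cdot\omega}$ for $|\omega|=1$, averaging gives $\Delta\varphi_\lambda=\lambda^2\varphi_\lambda$. The upper bound $\varphi_\lambda(x)\le e^{\lambda|x|}$ is immediate from $x\cdot\omega\le|x|$; the lower bound $\varphi_\lambda(x)\ge 1$ follows from Jensen's inequality applied to the convex function $e^t$ together with $\int_{\mathbb{S}^{N-1}}x\cdot\omega\,d\omega=0$ (by the reflection $\omega\mapsto-\omega$); and $\lim_{\lambda\to 0}\varphi_\lambda(x)=\varphi(0)=1$ is dominated convergence. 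Each of these has a transparent $\cosh$ analogue when $N=1$.

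Part 3 is where the genuine work lies. I would apply Laplace's method to the representation from Part 1: as $\lambda|x|\to\infty$ the integral concentrates at the endpoint $\theta=1$ where the exponent is maximal. Substituting $\theta=1-s$ and using $(1-\theta^2)^{(N-3)/2}\sim(2s)^{(N-3)/2}$ near $s=0$ reduces the leading term to $e^{\lambda|x|}2^{(N-3)/2}\int_0^\infty s^{(N-3)/2}e^{-\lambda|x|s}\,ds=c_N(\lambda|x|)^{-(N-1)/2}e^{\lambda|x|}$ via the Gamma integral, which is the claimed asymptotics. The hard part will be promoting this pointwise asymptotic into the uniform two-sided bound \eqref{2-11-2}: I would introduce the profile $H(r):=\varphi_\lambda(x)(1+r)^{(N-1)/2}e^{-r}$ with $r=\lambda|x|$, which is well defined precisely because $\varphi_\lambda$ depends only on $r$, observe that $H$ is continuous and strictly positive on $[0,\infty)$ with $H(0)=1$ and a finite positive limit as $r\to\infty$ by Part 3, and conclude that $H$ is bounded above and below by positive constants. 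Since these bounds depend only on $r$ and not on how $r=\lambda|x|$ is split between $\lambda$ and $|x|$, the estimate \eqref{2-11-2} holds uniformly in $\lambda$, which completes the proof.
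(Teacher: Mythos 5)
Your proof is correct and complete. The paper itself gives no proof of this lemma --- it only points to \cite{John} and \cite{YZ06} --- and your argument (slicing the sphere for the integral representation, Jensen's inequality and domination for Part 2, endpoint Laplace asymptotics via $\theta=1-s$ for Part 3, and the compactness argument on the radial profile $H(r)=\varphi_\lambda(x)(1+r)^{(N-1)/2}e^{-r}$ to upgrade the pointwise asymptotics to the uniform two-sided bound \eqref{2-11-2}) is precisely the standard route taken in those references. One small point your computation exposes: the identity \eqref{2-2-1} as printed omits the normalizing factor $\left|\mathbb{S}^{N-2}\right|/\left|\mathbb{S}^{N-1}\right|$ (take $x=0$ and $N=3$: the left side is $1$, the right side is $2$); since the lemma is only ever invoked up to multiplicative constants this is harmless for the rest of the paper, but your version, which keeps the constant explicit and checks it against $\varphi_\lambda(0)=1$, is the correct statement.
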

For the proof of this lemma, see  \cite[P.8]{John} and  \cite[P.364]{YZ06}. We also estimate the Lebesgue norm of $\varphi_{\lambda}$.
\begin{corollary}
\label{estphi}
Let $\theta\in (1,\infty)$, $\lambda\in (0,1)$ and $\varphi_{\lambda}:\R^N\rightarrow\R_{>0}$ be defined by (\ref{test2}). Then there exist constants $C=C(N,\theta)>1$ independent of $\lambda$ and $R_0=R_0(N,\theta,\lambda)>0$ such that for any $\mathcal{R}\ge \mathcal{R}_0$, the estimate
\begin{equation}
    \int_{|x|<\mathcal{R}}\{\varphi_{\lambda}(x)\}^{\theta}dx\le C\lambda^{-\frac{N-1}{2}\theta-1}(1+\mathcal{R})^{N-1-\frac{N-1}{2}\theta}e^{\lambda\theta \mathcal{R}}
\end{equation}
holds.
\end{corollary}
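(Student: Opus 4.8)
The plan is to reduce everything to the two-sided pointwise bound (\ref{2-11-2}) and then to a one-dimensional Laplace-type integral. First I would discard the lower bound and keep only the upper estimate in (\ref{2-11-2}), which gives $\{\varphi_{\lambda}(x)\}^{\theta}\le C^{\theta}(1+\lambda|x|)^{-\frac{N-1}{2}\theta}e^{\lambda\theta|x|}$ with $C=C(N)$ independent of $\lambda$. Passing to polar coordinates (valid also for $N=1$ with $|\mathbb{S}^{0}|=2$ and $r^{0}=1$) reduces the problem to estimating the radial integral $J:=\int_{0}^{\mathcal{R}}r^{N-1}(1+\lambda r)^{-\frac{N-1}{2}\theta}e^{\lambda\theta r}\,dr$, and it then suffices to prove $J\le C(N,\theta)\,\lambda^{-\frac{N-1}{2}\theta-1}(1+\mathcal{R})^{N-1-\frac{N-1}{2}\theta}e^{\lambda\theta\mathcal{R}}$ for all $\mathcal{R}$ large depending on $\lambda$.

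The heart of the argument is a Laplace-type estimate showing that $J$ is governed, up to the factor $\lambda^{-1}$ coming from the rate of the exponential, by the value of the integrand at the endpoint $r=\mathcal{R}$. Writing $F(r)$ for the integrand, I would bound the ratio $F(r)/F(\mathcal{R})$ for $0\le r\le\mathcal{R}$. The monomial factor only helps, since $(r/\mathcal{R})^{N-1}\le1$, while the potentially harmful factor $\bigl(\tfrac{1+\lambda r}{1+\lambda\mathcal{R}}\bigr)^{-\frac{N-1}{2}\theta}\ge1$ is controlled by the elementary inequality $\frac{1+\lambda\mathcal{R}}{1+\lambda r}\le1+\lambda(\mathcal{R}-r)$, valid for $0\le r\le\mathcal{R}$. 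Combining these gives $\frac{F(r)}{F(\mathcal{R})}\le(1+\lambda(\mathcal{R}-r))^{\frac{N-1}{2}\theta}e^{-\lambda\theta(\mathcal{R}-r)}$. The substitution $s=\lambda(\mathcal{R}-r)$ then turns the integral of this ratio into $\frac{1}{\lambda}\int_{0}^{\lambda\mathcal{R}}(1+s)^{\frac{N-1}{2}\theta}e^{-\theta s}\,ds$, which is bounded by $\frac{1}{\lambda}\int_{0}^{\infty}(1+s)^{\frac{N-1}{2}\theta}e^{-\theta s}\,ds=\frac{C(N,\theta)}{\lambda}$, the convergence being forced by $\theta>0$. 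This yields $J\le C(N,\theta)\,\lambda^{-1}F(\mathcal{R})=C(N,\theta)\,\lambda^{-1}\mathcal{R}^{N-1}(1+\lambda\mathcal{R})^{-\frac{N-1}{2}\theta}e^{\lambda\theta\mathcal{R}}$.

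Finally I would convert $F(\mathcal{R})$ into the claimed form. Choosing $\mathcal{R}_{0}=\mathcal{R}_{0}(\lambda):=1/\lambda$, which exceeds $1$ because $\lambda\in(0,1)$, for $\mathcal{R}\ge\mathcal{R}_{0}$ we have $\lambda\mathcal{R}\ge1$, so $(1+\lambda\mathcal{R})^{-\frac{N-1}{2}\theta}\le(\lambda\mathcal{R})^{-\frac{N-1}{2}\theta}=\lambda^{-\frac{N-1}{2}\theta}\mathcal{R}^{-\frac{N-1}{2}\theta}$ (the exponent being nonpositive), and $\mathcal{R}^{N-1-\frac{N-1}{2}\theta}\le C(1+\mathcal{R})^{N-1-\frac{N-1}{2}\theta}$ since $\mathcal{R}\ge1$. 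Plugging these into the bound for $J$ produces exactly $C(N,\theta)\,\lambda^{-\frac{N-1}{2}\theta-1}(1+\mathcal{R})^{N-1-\frac{N-1}{2}\theta}e^{\lambda\theta\mathcal{R}}$, and tracking constants shows they depend only on $N$ and $\theta$, with the radius threshold depending on $\lambda$, as required. I expect the main obstacle to be precisely the Laplace estimate of $J$: the naive bound $(1+\lambda r)^{-\frac{N-1}{2}\theta}\le1$ is too lossy, since it destroys the algebraic decay in $\mathcal{R}$, so the slowly varying algebraic factor must be retained and absorbed through the ratio inequality above; once this one nontrivial step is in place, the remainder is bookkeeping.
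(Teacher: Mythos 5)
Your proof is correct, and it reaches the stated bound by a slightly different route than the paper. The paper also starts from the upper bound in (\ref{2-11-2}) and passes to the radial integral, but it first absorbs $r^{N-1}$ via $r\le\lambda^{-1}(1+\lambda r)$ and then splits $[0,\mathcal{R}]$ at $\mathcal{R}/2$: on $[0,\mathcal{R}/2]$ the exponential is at most $e^{\lambda\theta\mathcal{R}/2}$ and the resulting term is absorbed into the main one by \emph{defining} $\mathcal{R}_0$ through the condition $(1+\mathcal{R})^{-(N-1)(1-\theta/2)}e^{\frac34\lambda\theta\mathcal{R}}\le e^{\lambda\theta\mathcal{R}}$, while on $[\mathcal{R}/2,\mathcal{R}]$ the algebraic factor is frozen at its endpoint value and the exponential integrates to $(\lambda\theta)^{-1}e^{\lambda\theta\mathcal{R}}$. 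You avoid the splitting entirely by the endpoint-ratio bound $F(r)/F(\mathcal{R})\le(1+\lambda(\mathcal{R}-r))^{\frac{N-1}{2}\theta}e^{-\lambda\theta(\mathcal{R}-r)}$, based on the correct elementary inequality $\frac{1+\lambda\mathcal{R}}{1+\lambda r}\le 1+\lambda(\mathcal{R}-r)$, and then a single convergent Gamma-type integral produces the factor $\lambda^{-1}$. Both arguments are Laplace-type estimates exploiting that the integrand concentrates near $r=\mathcal{R}$; yours buys a cleaner and fully explicit threshold $\mathcal{R}_0=1/\lambda$ and a uniform treatment of the algebraic factor, whereas the paper's split is more routine but leaves $\mathcal{R}_0$ defined only implicitly. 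Your final bookkeeping (using $\lambda\mathcal{R}\ge1$ to convert $(1+\lambda\mathcal{R})^{-\frac{N-1}{2}\theta}$ into $\lambda^{-\frac{N-1}{2}\theta}\mathcal{R}^{-\frac{N-1}{2}\theta}$, and $\mathcal{R}\ge1$ to compare $\mathcal{R}$ with $1+\mathcal{R}$ for either sign of the exponent $(N-1)(1-\theta/2)$) is sound, and the constants indeed depend only on $N$ and $\theta$.
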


\begin{proof}[Proof of Corollary \ref{estphi}]
We note that there exists $\mathcal{R}_0=\mathcal{R}_0(N,\theta,\lambda)>0$ such that for any $\mathcal{R}\ge \mathcal{R}_0$, the estimate $(1+R)^{-(N-1)(1-\frac{\theta}{2})}e^{\frac{3}{4}\lambda\theta\mathcal{R}}\le e^{\lambda\theta\mathcal{R}}$ holds. Let $\mathcal{R}\ge\mathcal{R}_0$. By this fact, the right inequality of (\ref{2-11-2}), the assumption $\lambda\in (0,1)$ and changing variables, the estimates
\begin{align*}
\label{2-12}
    \int_{|x|<\mathcal{R}}\{\varphi_{\lambda}(x)\}^{\theta}dx&\le C^{\theta}\int_{|x|\le \mathcal{R}}(1+\lambda|x|)^{-\frac{N-1}{2}\theta}e^{\lambda\theta|x|}dx\\
    &\le  C^{\theta}\lambda^{-N+1}\left|\mathbb{S}^{N-1}\right|\int_0^{\mathcal{R}}(1+\lambda r)^{-\frac{N-1}{2}\theta+N-1}e^{\lambda\theta r}dr\\
    &=C^{\theta}\lambda^{-N+1}\left|\mathbb{S}^{N-1}\right|\left(\int_0^{\frac {\mathcal{R}}2}(1+\lambda r)^{-\frac{N-1}{2}\theta+N-1}e^{\lambda\theta r}dr+\int_{\frac {\mathcal{R}}2}^{\mathcal{R}}(1+\lambda r)^{-\frac{N-1}{2}\theta+N-1}e^{\lambda\theta r}dr\right)\\
    &\le C\lambda^{-\frac{N-1}{2}\theta}e^{\frac 34\lambda\theta \mathcal{R}}+C_6 \lambda^{-\frac{N-1}{2}\theta-1}(1+\mathcal{R})^{N-1-\frac{N-1}{2}\theta}e^{\lambda\theta \mathcal{R}}\\
    &\le C\lambda^{-\frac{N-1}{2}\theta-1}(1+\mathcal{R})^{N-1-\frac{N-1}{2}\theta}e^{\lambda\theta \mathcal{R}}
\end{align*}
hold, which completes the proof of the corollary.
\end{proof}

Next we introduce a family of special solutions $\{w_{\lambda,m}\}_{\lambda>0}$ to the free equation (\ref{freeeq}).
\begin{definition}[A family of special solutions I]
\label{specialsolsub}
Let $m\ge 0$ and $\lambda>0$. Let $y_{\lambda}=y_{\lambda,m}:\R_{\ge 0}\rightarrow\R_{> 0}$ and $\varphi_{\lambda}:\R^N\rightarrow\R_{>0}$ be defined by (\ref{ODEsol}) and (\ref{test2}), respectively.
We define a function $w_{\lambda}=w_{\lambda,m}:\R^N\times\R_{\ge 0}\rightarrow\R_{>0}$ given by
\begin{equation}
\label{testsub}
w_{\lambda}(x,t)=w_{\lambda,m}(x,t):=y_{\lambda,m}(t)\varphi_{\lambda}(x).
\end{equation}
\end{definition}
We remark that for any $\lambda>0$, the equation (\ref{ODE}) and Lemma \ref{Property} imply that the function $w_{\lambda}$ satisfies
\begin{equation}
\label{freesol1}
    \partial_t^2w_{\lambda}(x,t)-t^m\Delta w_{\lambda}(x,t)=0,\ \ \ (x,t)\in \R^N\times\R_{\ge 0}.
\end{equation}
Several properties of the family $\{w_{\lambda,m}\}_{\lambda>0}$ play an important role to derive the estimate (\ref{equ3}) in the proof of Theorems \ref{Main} and \ref{Main2}.

Next we introduce another family of special solutions $\{W_{\beta,m}(x,t)\}_{\beta>0}$ to the free generalized Tricomi equation (\ref{freeeq}).

\begin{definition}[A family of special solutions II]
\label{specialsol}
Let $m\ge 0$ and $\{w_{\lambda,m}(x,t)\}_{\lambda>0}$ be the family of the functions given by (\ref{testsub}). For $\beta>0$, we define a positive function $W_{\beta}=W_{\beta,m}:\R^N\times\R_{\ge 0}\rightarrow\R_{>0}$ given by
\begin{equation}
\label{testcri}
W_\beta(x,t)=W_{\beta,m}(x,t):=\int_{0}^{1}w_{\lambda,m}(x,t)\lambda^{\beta-1}d\lambda.
\end{equation}
\end{definition}
From Lemma \ref{propy} for the function $y_{\lambda}$, Lemma \ref{Property} for the function $\varphi_{\lambda}$ and the condition $\beta>0$, we see that the function $W_\beta$ is well defined, i.e. $|W_{\beta}(x,t)|<\infty$ for any $(x,t)\in \R^N\times \R_{\ge 0}$. We remark that since $w_{\lambda}$ satisfies (\ref{freesol1}) on $\R^N\times\R_{\ge 0}$, the function $W_{\beta}$ satisfies
\begin{equation}
\label{solW}
\partial_t^2 W_\beta(x,t)-t^m \Delta W_\beta(x,t) =0,\ \ \ (x,t)\in \R^N\times\R_{\ge 0}.
\end{equation}
Moreover we state several estimates for the solution $W_{\beta,m}$ in the following proposition.
\begin{proposition}[Properties of the function $W_{\beta,m}$]
\label{propertest}
Let $m\ge 0$, $\beta>0$ and $W_{\beta,m}:\R^N\times \R_{\ge 0}\rightarrow \R_{>0}$ be defined by (\ref{testcri}). Then the following statements hold:
\begin{enumerate}
\item For any $(x,t)\in \R^N\times(1,\infty)$, the estimate
\begin{equation}\label{2-19}
    |\partial_t W_{\beta,m}(x,t)|\le C\exp(t^{-\gamma}|x|)t^{-\gamma(\beta+\frac{1}{\gamma})}+Ct^{\frac{m}{2}}W_{\beta+1,m}(x,t)
\end{equation}
holds for some positive constant $C=C(m)>1$.
\item Let $T_0\ge 1$ be given by (\ref{esty}). Then there exists a positive constant $C=C(m,\beta)>0$ dependent only on $m$ and $\beta$ such that for any $(x,t)\in \R^N\times\big((2T_0)^{\frac{1}{\gamma}},\infty\big)$, the estimate
\begin{equation}\label{Pro2.4.2}
    W_{\beta,m}(x,t) \geq C t^{-\frac {m+2}2 \beta}
\end{equation}
holds.
\item Let $r_0>0$ and $T_0\ge 1$ be given by (\ref{esty}). We further assume that $\beta > \frac{N}{2}-\frac{1}{m+2}$. Then there exists a positive constant $C=C(N,m,\beta,r_0)>0$ such that for any $(x,t)\in \R^N\times\big(T_0^{\frac{1}{\gamma}},\infty\big)$ with $|x|<r_0+\frac{1}{\gamma}t^{\gamma}$, the estimate holds:
\begin{equation}
\label{Pro2.4.3}
W_{\beta,m}(x,t) \leq C t^{-\frac m4-\frac{(N-1)(m+2)}{4}}\left(r_0+\frac 1\gamma t^\gamma-|x|\right)^{-\beta-\frac{1}{m+2}+\frac{N}{2}}.
\end{equation}
\item Let $N\geq 2$ and $\frac 12 - \frac 1{m+2}<\beta<\frac {N}2 - \frac 1{m+2}$. Then there exists a positive constant $C=C(N,m,\beta,r_0)>0$ such that for any $(x,t)\in \R^N\times\big(T_0^{\frac{1}{\gamma}},\infty\big)$ with $|x|<r_0+\frac{1}{\gamma}t^{\gamma}$,
\begin{equation}\label{Pro2.4.4}
W_{\beta,m}(x,t)\leq C t^{-\frac m4}\left(r_0+\frac {1}{\gamma}t^{\gamma}+|x|\right)^{-\beta+\frac 12-\frac 1{m+2}}.
\end{equation}
\end{enumerate}
\end{proposition}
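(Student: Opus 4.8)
The plan is to work directly from the definition (\ref{testcri}), writing every quantity as a $\lambda$-integral of the separated profile $w_{\lambda,m}=y_{\lambda,m}(t)\varphi_\lambda(x)$ and inserting the pointwise bounds already established for the two factors: the Bessel-type estimates (\ref{esty}) and (\ref{estdy}) for $y_\lambda$ from Lemma \ref{propy}, and the estimate (\ref{2-11-2}) for $\varphi_\lambda$ from Lemma \ref{Property}. The single algebraic observation driving the spatial estimates is that for $\lambda t^\gamma\ge T_0$ the product combines as
\[
\varphi_\lambda(x)\,y_\lambda(t)\ \lesssim\ (\lambda t^\gamma)^{-\frac{m}{4\gamma}}(1+\lambda|x|)^{-\frac{N-1}2}\exp\!\left(\lambda|x|-\tfrac1\gamma\lambda t^\gamma\right),
\]
and that on the region under consideration $|x|=r_0+\frac1\gamma t^\gamma-d$ with $d:=r_0+\frac1\gamma t^\gamma-|x|>0$, so the exponent equals $\lambda(r_0-d)\le r_0-\lambda d$ because $\lambda\in(0,1)$; hence the exponential is controlled by $e^{r_0}e^{-\lambda d}$. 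Moreover $(\lambda t^\gamma)^{-\frac{m}{4\gamma}}=\lambda^{-\frac{m}{4\gamma}}t^{-\frac m4}$ factors out the universal prefactor $t^{-m/4}$, and one records $\frac m{4\gamma}=\frac12-\frac1{m+2}$, which is what makes the stated exponents appear.

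For assertion (1) I would differentiate (\ref{testcri}) under the integral sign and split the $\lambda$-range at $\lambda=t^{-\gamma}$. On $0<\lambda<t^{-\gamma}$ the monotonicity of $-y_\lambda'$ from part 1 of Lemma \ref{propy}, together with $y_\lambda(t)=y_1(\lambda^{1/\gamma}t)$ and (\ref{y'(0)}), gives $|\partial_ty_\lambda(t)|\le C\lambda^{1/\gamma}$, while $\varphi_\lambda(x)\le e^{\lambda|x|}\le e^{t^{-\gamma}|x|}$; integrating $\lambda^{1/\gamma+\beta-1}$ over $(0,t^{-\gamma})$ produces the first term $C\exp(t^{-\gamma}|x|)\,t^{-\gamma(\beta+1/\gamma)}$. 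On $\lambda\ge t^{-\gamma}$ the estimate (\ref{estdy}) gives $|\partial_ty_\lambda(t)|\le Ct^{m/2}\lambda y_\lambda(t)$, and since $\lambda\cdot\lambda^{\beta-1}=\lambda^{(\beta+1)-1}$ the remaining integral is exactly $Ct^{m/2}W_{\beta+1,m}(x,t)$. For assertion (2) I would discard the spatial factor via $\varphi_\lambda\ge1$ (part 2 of Lemma \ref{Property}) and restrict to the band $\lambda\in[T_0t^{-\gamma},2T_0t^{-\gamma}]\subset(0,1)$ (using $t\ge(2T_0)^{1/\gamma}$), where the lower bound in (\ref{esty}) keeps $y_\lambda(t)$ bounded below by a constant; the length of the band contributes $\int\lambda^{\beta-1}d\lambda\sim t^{-\gamma\beta}$, giving (\ref{Pro2.4.2}).

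Assertions (3) and (4) are the substantial ones; both reduce, after the combination above, to estimating $\int_0^1\lambda^{\kappa-1}(1+\lambda|x|)^{-\frac{N-1}2}e^{-\lambda d}\,d\lambda$ for a suitable exponent $\kappa$, plus a harmless low-frequency piece $\lambda<T_0t^{-\gamma}$ (where $y_\lambda\le1$) that one checks is dominated. The mechanism is a change of variable turning this into a Gamma-type integral. For (3), near the cone where $|x|\sim t^\gamma$, one replaces $(1+\lambda|x|)^{-\frac{N-1}2}\lesssim(\lambda t^\gamma)^{-\frac{N-1}2}=\lambda^{-\frac{N-1}2}t^{-\frac{(N-1)(m+2)}4}$, supplying the extra $t$-power; then $\int_0^\infty\lambda^{a-1}e^{-\lambda d}\,d\lambda=\Gamma(a)\,d^{-a}$ with $a=\beta-\frac m{4\gamma}-\frac{N-1}2$ converges exactly when $\beta>\frac N2-\frac1{m+2}$, and $-a=-\beta-\frac1{m+2}+\frac N2$ reproduces (\ref{Pro2.4.3}). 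For (4) one instead retains the weight and sets $b=\beta-\frac m{4\gamma}$; the hypothesis $\frac12-\frac1{m+2}<\beta<\frac N2-\frac1{m+2}$ is precisely $0<b<\frac{N-1}2$, which guarantees convergence of $\int_0^\infty s^{b-1}(1+s)^{-\frac{N-1}2}\,ds$ at both endpoints (the upper one needing $N\ge2$), and the substitution $s=\lambda|x|$ (near the cone) or $s=\lambda d$ (in the interior) yields the factor $(t^\gamma+|x|)^{-b}$ of (\ref{Pro2.4.4}).

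The main obstacle is the uniform treatment of $(1+\lambda|x|)^{-\frac{N-1}2}$ across the whole admissible set $|x|<r_0+\frac1\gamma t^\gamma$. Near the light cone $|x|$ is comparable to $t^\gamma$ and one may trade $|x|$ for $t^\gamma$ to extract the sharp spatial decay, whereas in the interior $|x|\ll t^\gamma$ this trade is lossy and one must instead rely on the exponential gain $e^{-\lambda d}$ with $d\sim t^\gamma$. Consequently (3) and (4) each require a dichotomy ($|x|\gtrsim t^\gamma$ versus $|x|\ll t^\gamma$), and the two regimes must be glued into the single stated bound; this works because at the interface all exponents balance to the common rate $t^{-\gamma\beta}$, which is also the rate of the lower bound (2). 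Verifying that these balances are exact, that the low-frequency remainder is subordinate, and that the endpoint behaviour of the Gamma integrals matches the precise thresholds $\frac N2-\frac1{m+2}$ and $\frac12-\frac1{m+2}$, is the crux of the computation.
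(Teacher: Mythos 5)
Your treatment of parts (1)--(3) is essentially the paper's own argument: the same split of the $\lambda$-integral at $t^{-\gamma}$ (resp.\ $T_0t^{-\gamma}$), the same band $[T_0t^{-\gamma},2T_0t^{-\gamma}]$ for the lower bound (2), and for (3) the same dichotomy between $|x|\le\frac12(r_0+\gamma^{-1}t^{\gamma})$ and the near-cone region, with the Gamma-type integral in $\lambda\,(r_0+\gamma^{-1}t^{\gamma}-|x|)$; your bookkeeping ($\tfrac{m}{4\gamma}=\tfrac12-\tfrac1{m+2}$, $a=\beta+\tfrac1{m+2}-\tfrac N2>0$ exactly when $\beta>\tfrac N2-\tfrac1{m+2}$, the low-frequency piece $\mathcal{J}^1_{\beta}\lesssim t^{-\gamma\beta}$ being dominated because the stated right-hand sides are all $\gtrsim t^{-\gamma\beta}$) checks out. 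Where you genuinely diverge is part (4). The paper does not invoke the pointwise bound (\ref{2-11-2}) there: it substitutes the spherical representation (\ref{2-2-1}) for $\varphi_{\lambda}$, exchanges the $\lambda$- and $\theta$-integrations, evaluates the $\lambda$-integral as a Gamma function to produce $(r_0+\gamma^{-1}t^{\gamma}-\theta|x|)^{-\beta+\frac12-\frac1{m+2}}$, and then reduces the claim to the uniform boundedness in $z\in[0,1]$ of $\int_0^1\tilde\theta(1-\tilde\theta)^{\frac{N-3}2}(1-\tilde\theta z)^{-\beta+\frac12-\frac1{m+2}}d\tilde\theta$, which is precisely where the two-sided restriction $\frac12-\frac1{m+2}<\beta<\frac N2-\frac1{m+2}$ enters (integrability near $\tilde\theta=1$ when $z=1$). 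You instead keep the weight $(1+\lambda|x|)^{-\frac{N-1}2}$ from (\ref{2-11-2}) and rerun the near-cone/interior dichotomy of part (3), reducing to the convergence of $\int_0^{\infty}s^{b-1}(1+s)^{-\frac{N-1}2}ds$ with $b=\beta-\frac12+\frac1{m+2}\in(0,\frac{N-1}2)$ --- the same window, now appearing as the two endpoints of a Beta-type integral. Both routes are correct; yours is more uniform with part (3) and avoids the representation formula, while the paper's yields the bound in $|x|$ in one stroke without gluing two spatial regimes, at the cost of the one-dimensional $\theta$-integral analysis.
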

These estimates of the function $W_{\beta,m}(x,t)$ are utilized to treat the critical cases $\Omega_{SS}=0$ and $\Gamma_{SS}=0$ in the proof of Theorems \ref{Main} and \ref{Main2} respectively (see subsection \ref{Cri} for more precise).

\begin{proof}[Proof of Proposition \ref{propertest}]

1. 
Let $(x,t)\in \R^N\times (1,\infty)$. Then we see that $t^{-\gamma}\in (0,1)$. By a direct computation, the identities
\begin{equation}
\label{dW}
    \partial_tW_{\beta}(x,t)=\int_0^{t^{-\gamma}}(\partial_ty_{\lambda}(t))\varphi_{\lambda}(x)\lambda^{\beta-1}d\lambda
    +\int_{t^{-\gamma}}^1(\partial_ty_{\lambda}(t))\varphi_{\lambda}(x)\lambda^{\beta-1}d\lambda=:\mathcal{I}_{\beta}^1(x,t)+\mathcal{I}_{\beta}^2(x,t)
\end{equation}
hold. By the identities $\partial_ty_{\lambda}(t)=\lambda^{\frac{1}{\gamma}}y'_1(\lambda^{\frac{1}{\gamma}}t)$ and (\ref{y'(0)}), Lemma \ref{Property} and $\beta>0$, the estimates
\begin{align}
    \left|\mathcal{I}_{\beta}^1(x,t)\right|
    \le C_0'\int_0^{t^{-\gamma}}e^{\lambda|x|}\lambda^{\beta-1+\frac{1}{\gamma}}d\lambda\le C_0'\exp\left(t^{-\gamma}|x|\right)\int_0^{t^{-\gamma}}\lambda^{\beta-1+\frac{1}{\gamma}}d\lambda\le C_0'\exp(t^{-\gamma}|x|)t^{-\gamma\left(\beta+\frac{1}{\gamma}\right)}
\end{align}
hold. By the right estimate of (\ref{estdy}), the inequalities
\begin{align}
\label{estI2}
    \left|\mathcal{I}_{\beta}^2(x,t)\right|\le \int_{t^{-\gamma}}^1|\partial_ty_{\lambda}(t)|\varphi_{\lambda}(x)\lambda^{\beta-1}d\lambda
    \le C t^{\frac{m}{2}}\int_{t^{-\gamma}}^1y_{\lambda}(t)\varphi_{\lambda}(x)\lambda^{\beta}d\lambda\le C t^{\frac{m}{2}}W_{\beta+1}(x,t)
\end{align}
hold.
 By combining (\ref{dW})-(\ref{estI2}), we obtain the conclusion.

2. Let $(x,t)\in\R^N\times\big((2T_0)^{\frac{1}{\gamma}},\infty\big)$. Then we see that the inequalities $0<T_0t^{-\gamma}<2T_0t^{-\gamma}<1$ hold. By the left estimate of (\ref{esty}), Lemma \ref{Property} and the assumption $\beta>0$, the estimates

\begin{align*}
    W_{\beta}(x,t)&\ge \int_{T_0t^{-\gamma}}^{2T_0t^{-\gamma}}y_{\lambda}(t)\varphi_{\lambda}(x)\lambda^{\beta-1}d\lambda
    \ge C t^{-\frac{m}{4}}\int_{T_0t^{-\gamma}}^{2T_0t^{-\gamma}}
    \exp\left(-\gamma^{-1}\lambda t^{\gamma}\right)\lambda^{\beta-\frac{3}{2}+\frac{1}{m+2}}d\lambda\\
    &\ge C\exp(-2\gamma^{-1}T_0)t^{-\frac{m}{4}}\int_{T_0t^{-\gamma}}^{2T_0t^{-\gamma}}
    \lambda^{\beta-\frac{3}{2}+\frac{1}{m+2}}d\lambda
    =C t^{-\frac {m+2}2 \beta}
\end{align*}
hold for some constant $C=C(m,\beta)>0$.

3. Let $(x,t)\in \R^N\times\big(T_0^{\frac{1}{\gamma}},\infty\big)$ with $|x|\le r_0+\gamma^{-1}t^{\gamma}$. Noting that $T_0t^{\gamma}\in (0,1)$, by dividing the integral region $[0,1]$ in the definition of $W_{\beta}$ into $[0,T_0t^{-\gamma})$ and $[T_0t^{-\gamma},\infty)$, the identity
\[
   W_{\beta}(x,t)=\int_0^{T_0t^{-\gamma}}w_{\lambda}(x,t)\lambda^{\beta-1}d\lambda+\int_{T_0t^{-\gamma}}^1w_{\lambda}(x,t)\lambda^{\beta-1}d\lambda
   =:\mathcal{J}^1_{\beta}(x,t)+\mathcal{J}^2_{\beta}(x,t)
\]
hold. By Lemmas \ref{propy} and \ref{Property}, the estimates
\begin{align}
    \mathcal{J}^1_{\beta}(x,t)&\le\int_0^{T_0t^{-\gamma}}e^{\lambda|x|}\lambda^{\beta-1}d\lambda\le \exp(T_0t^{-\gamma}|x|)\int_0^{T_0t^{-\gamma}}\lambda^{\beta-1}d\lambda\notag\\
    &=\beta^{-1}T_0^{\beta}\exp (T_0t^{-\gamma}|x|)t^{-\frac{m+2}{2}\beta}
    \le C t^{-\frac{m+2}{2}\beta}
    \label{estJ_1}
\end{align}
hold for the positive constant $C=C(\beta,m,r_0):=\beta^{-1}T_0^{\beta}\exp(\gamma^{-1}T_0+r_0)>0$.
By the right estimate of (\ref{esty}) and Lemma \ref{Property}, the estimates
\begin{align}
\mathcal{J}_{\beta}^2(x,t)\le Ct^{-\frac{m}{4}}\int_{T_0t^{-\gamma}}^1\left(1+|\lambda x|\right)^{-\frac{N-1}{2}}\exp\left(-\gamma^{-1}\lambda t^{\gamma}+\lambda|x|\right)\lambda^{\beta-\frac{3}{2}+\frac{1}{m+2}}d\lambda
=:Ct^{-\frac{m}{4}}\mathcal{I}_{\beta}(x,t)
\label{2-10}
\end{align}
hold. We note that the estimates $c(t^{\gamma}+\gamma r_0)\le t^{\gamma}\le t^{\gamma}+\gamma r_0$ hold for $t\ge T_0^{\frac{1}{\gamma}}$, where $c=c(T_0):=T_0(T_0+\gamma r_0)^{-1}$ is independent of $t$. If $|x|\le 2^{-1}(r_0+\gamma^{-1}t^{\gamma})$, by noting that the inequalities
\[
   \gamma\left(\gamma^{-1}t^{\gamma}+r_0-|x|\right)\le t^{\gamma}+\gamma r_0\le 2\gamma\left(\gamma^{-1}t^{\gamma}+r_0-|x|\right)
\]
hold and changing variables with $(2\gamma)^{-1}t^{\gamma}\lambda=\mu$, the estimates hold:
\begin{align}
    \mathcal{I}_{\beta}(x,t)
    &\le \exp\left(\frac{r_0}{2}\right)\int_{T_0t^{-\gamma}}^1\exp\left(-\frac{t^{\gamma}}{2\gamma}\lambda\right)\lambda^{\beta+\frac{1}{m+2}-\frac{3}{2}}d\lambda
    =\exp\left(\frac{r_0}{2}\right)(2\gamma t^{-\gamma})^{\beta-\frac{1}{2}+\frac{1}{m+2}}\int_{\frac{T_0}{2\gamma}}^{\frac{t^{\gamma}}{2\gamma}}e^{-\mu}\mu^{\beta+\frac{1}{m+2}-\frac{3}{2}}d\mu\notag\\
    &\le \exp\left(\frac{r_0}{2}\right)(2\gamma)^{\beta-\frac{1}{2}+\frac{1}{m+2}}\Gamma\left(\beta-\frac{1}{2}+\frac{1}{m+2}\right)t^{-\frac{m+2}{2}\beta+\frac{m}{4}}.\label{2-11}
\end{align}
If $2^{-1}(r_0+\gamma^{-1}t^{\gamma})<|x|<r_0+\gamma^{-1}t^{\gamma}$, by changing variables with $\lambda(\gamma^{-1}t^{\gamma}+r_0-|x|)=\mu^{-1}$ and the assumption $\beta > \frac{N}{2}-\frac{1}{m+2}$, the estimates
\begin{align}
    \mathcal{I}_{\beta}(x,t)&\le (2\gamma)^{\frac{N-1}{2}}\exp(r_0)t^{-\frac{\gamma(N-1)}{2}}\int_{T_0t^{-\gamma}}^1\exp\left(-\lambda\left(\gamma^{-1}t^{\gamma}+r_0-|x|\right)\right)\lambda^{\beta-\frac{1}{2}+\frac{1}{m+2}-\frac{N+1}{2}}d\lambda\notag\\
    &=(2\gamma)^{\frac{N-1}{2}}\exp(r_0)t^{-\frac{\gamma(N-1)}{2}}\left(\gamma^{-1}t^{\gamma}+r_0-|x|\right)^{-\beta-\frac{1}{m+2}+\frac{N}{2}}\int_{(\gamma^{-1}t^{\gamma}+r_0-|x|)^{-1}}^{t^{\gamma}T_0(\gamma^{-1}t^{\gamma}+r_0-|x|)^{-1}}\exp(-\mu^{-1})\mu^{-\beta-\frac{1}{m+2}+\frac{N-2}{2}}d\mu\notag\\
    &\le (2\gamma)^{\frac{N-1}{2}}\exp(r_0)t^{-\frac{\gamma(N-1)}{2}}\left(\gamma^{-1}t^{\gamma}+r_0-|x|\right)^{-\beta-\frac{1}{m+2}+\frac{N}{2}}\int_{0}^{\infty}\exp(-\mu^{-1})\mu^{-\beta-\frac{1}{m+2}+\frac{N-2}{2}}d\mu\notag\\
    &=Ct^{-\frac{\gamma(N-1)}{2}}\left(\gamma^{-1}t^{\gamma}+r_0-|x|\right)^{-\beta-\frac{1}{m+2}+\frac{N}{2}}\label{2-12}
\end{align}
hold for some positive constant $C=C(N,m,\beta,r_0)$. By combining the estimates (\ref{estJ_1})-(\ref{2-12}), we obtain the conclusion.

4. Similar as the proof of the estimate \eqref{Pro2.4.3}, we divide $W_{\beta,m}(x,t)$ into two parts as follows:
\[
   W_{\beta}(x,t)=\int_0^{T_0t^{-\gamma}}w_{\lambda}(x,t)\lambda^{\beta-1}d\lambda+\int_{T_0t^{-\gamma}}^1w_{\lambda}(x,t)\lambda^{\beta-1}d\lambda
   =:\mathcal{J}^1_{\beta}(x,t)+\mathcal{J}^2_{\beta}(x,t).
\]
It follows from \eqref{esty} and \eqref{2-2-1} that
\begin{align}
    \mathcal{J}^2_{\beta}(x,t)
    &\leq C \int_{T_0t^{-\gamma}}^1(\lambda^{\frac 1{\gamma}}t)^{-\frac m4}\exp{(-\gamma^{-1}(\lambda^{\frac 1{\gamma}}t)^{\gamma})}\varphi_{\lambda}(x)\lambda^{\beta-1}d\lambda\notag\\
    &= C \int_{T_0t^{-\gamma}}^1 t^{-\frac m4}\exp{\left(-\frac {\lambda}{\gamma}t^{\gamma}\right)}\int_{-1}^1(1-\theta^2)^{\frac {N-3}2}e^{\theta\lambda |x|}d\theta\lambda^{\beta-\frac 32+\frac 1{m+2}}d\lambda\notag\\
    &\leq Ce^{r_0} t^{-\frac m4}\int_{-1}^1(1-\theta^2)^{\frac {N-3}2}d\theta\int_{T_0t^{-\gamma}}^1 \exp{\lambda\left(\theta |x|-\frac {1}{\gamma}t^{\gamma}-r_0\right)}\lambda^{\beta-\frac 32+\frac 1{m+2}}d\lambda\notag\\
    &= C e^{r_0} t^{-\frac m4}\int_{-1}^1(1-\theta^2)^{\frac {N-3}2}\left(r_0+\frac {1}{\gamma}t^{\gamma}-\theta |x|\right)^{-\beta+\frac 12-\frac 1{m+2}}d\theta\int_{T_0t^{-\gamma}}^{r_0+\frac {1}{\gamma}t^{\gamma}-\theta |x|}e^{-\eta}\eta^{\beta-\frac 32+\frac 1{m+2}}d\eta\notag.
\end{align}
Noting that
$$
\int_{T_0t^{-\gamma}}^{r_0+\frac {1}{\gamma}t^{\gamma}-\theta |x|}e^{-\eta}\eta^{\beta-\frac 32+\frac 1{m+2}}d\eta\leq \int_0^{\infty}e^{-\eta}\eta^{\beta-\frac 32+\frac 1{m+2}}d\eta=\Gamma\left(\beta-\frac 12+\frac 1{m+2}\right),
$$
with $0\leq r_0+\frac{1}{\gamma}t^{\gamma}-\theta |x|<\infty$, we have
\begin{align}
     &\mathcal{J}^2_{\beta}(x,t)\\
     &\leq Ce^{r_0}\Gamma\left(\beta-\frac 12+\frac 1{m+2}\right) t^{-\frac m4}\int_{-1}^1(1-\theta^2)^{\frac {N-3}2}(r_0+\frac {1}{\gamma}t^{\gamma}-\theta |x|)^{-\beta+\frac 12-\frac 1{m+2}}d\theta\notag\\
     &=Ce^{r_0}\Gamma\left(\beta-\frac 12+\frac 1{m+2}\right)2^{N-2} t^{-\frac m4}\int_{0}^1(\tilde{\theta}(1-\tilde{\theta})^{\frac {N-3}2}(r_0+\frac {1}{\gamma}t^{\gamma}+|x|-2\tilde{\theta} |x|)^{-\beta+\frac 12-\frac 1{m+2}}d\tilde{\theta}\notag\\
     &\leq Ce^{r_0}\Gamma\left(\beta-\frac 12+\frac 1{m+2}\right)2^{N-2} t^{-\frac m4}\left(r_0+\frac {1}{\gamma}t^{\gamma}+|x|\right)^{-\beta+\frac 12-\frac 1{m+2}}\int_{0}^1(\tilde{\theta}(1-\tilde{\theta})^{\frac {N-3}2}(1-\tilde{\theta}z)^{-\beta+\frac 12-\frac 1{m+2}}d\tilde{\theta},\notag
\end{align}
where $\tilde{\theta}=\frac {\theta+1}2$ and $z=\frac {2|x|}{r_0+\frac {1}{\gamma}t^{\gamma}+|x|}$.

Direct analysis shows that if $\frac 12 - \frac 1{m+2}<\beta<\frac {N}2 - \frac 1{m+2}$,
$\int_{0}^1(\tilde{\theta}(1-\tilde{\theta})^{\frac {N-3}2}(1-\tilde{\theta}z)^{-\beta+\frac 12-\frac 1{m+2}}d\tilde{\theta}$ is bounded for $z\in [0,1]$. Thus,
\begin{equation}\label{2-31}
\mathcal{J}^2_{\beta}(x,t)\leq C(N,m,\beta,r_0) t^{-\frac m4}\left(r_0+\frac {1}{\gamma}t^{\gamma}+|x|\right)^{-\beta+\frac 12-\frac 1{m+2}},
\end{equation}
which completes the proof by comparing \eqref{2-31} with \eqref{estJ_1}.
\end{proof}

\section{Proof of Theorems \ref{Main} and \ref{Main2}}
\label{Proofmain}
\ \ \ In this section we give proofs of Theorems \ref{Main} and \ref{Main2}. Let us assume all the assumptions of Theorem \ref{Main} except for $m_1\ne m_2$ . We simply write $T(\varepsilon)$ as $T$ if it does not cause a confusion. We denote  by $p'$ and $q'$ the H\"older conjugate of $p$ and $q$ respectively. We only consider the case $m_1\ge m_2$, since the other case $m_1<m_2$ can be treated in the similar manner. In the case $m_1\ge m_2$, we only consider the case $q>2$, since the other case $q\le 2$ can be treated more simply. Let $\lambda_0\in (0,1)$, which will be determined later (see (\ref{estg_1})) and is independent of $\varepsilon$. We introduce $T_2=T_2(\lambda_0,m_1,m_2,N,q,r_0)>0$ given by
\[
   T_2:=2
   \begin{cases}
\ \max\left(\lambda_0^{-\frac{1}{\gamma(m_1)}}T_1(m_1),\left(\frac{m_1+2}{m_2+2}\right)^{\frac{2}{m_1-m_2}}, \{\gamma(m_1)\max(\mathcal{R}_0(N,q',\lambda_0)-r_0,1+r_0)\}^{\frac{1}{\gamma(m_1)}}\right), &\text{if}\ m_1>m_2,\\
\ \max\left(\lambda_0^{-\frac{1}{\gamma(m)}}T_1(m), \{\gamma(m)\max(\mathcal{R}_0(N,q',\lambda_0)-r_0,1+r_0)\}^{\frac{1}{\gamma(m)}}\right), &\text{if}\ m_1=m_2=m
\end{cases}
\]
where $\gamma(m):=\frac{m+2}{2}$, $T_1(m)>0$ is given in Lemma \ref{estdy}, $\mathcal{R}_0$ is given in Corollary \ref{estphi} and $r_0:=\sup \left\{|x|\in \R_{\ge 0}\ |\ x\in \supp(f_1,f_2,g_1,g_2)\right\}$ is given in Theorem \ref{Main}. We note that $T_2$ is independent of $\varepsilon$. If $T\le T_2$, then the conclusions of Theorem \ref{Main} and Theorem \ref{Main2} hold by the assumptions $p,q>1$ and $\Omega_{GG},\Omega_{SS}\ge 0$ with $m_1>m_2$ and $\Gamma_{SS}\ge 0$ with $m_1=m_2$, respectively, and taking $\varepsilon_0$ satisfying $\varepsilon_0\in (0,1]$. Thus we consider the case $T>T_2$ in the following. Let $R\in (T_2,T)$ be a parameter. Then we note that for $t\ge \frac{R}{2}$, the identity $\phi(t)=\frac{1}{\gamma(m_1)}t^{\gamma(m_1)}$ holds. We introduce a test function $\eta\in C_0^\infty([0,\infty))$ satisfying
\begin{equation}
\label{eta}
 \eta(s):=
\begin{cases}
\ 1, & s\in [0,1/2),\\
\ \mbox{decreasing}, &s\in [1/2 ,1),\\
\ 0, &s\in [1,\infty),\\
\end{cases}
\end{equation}
and also define a test function $\eta_R\in C_0^{\infty}([0,\infty))$ given by $\eta_R(s):=\eta\left(\frac{s}{R}\right)$. Let $k\geq \max\{2p',2q'\}$. For $r>0$, we denote by $B(r)$ the closed ball in $\R^N$ centered at the origin with a radius $r$, i.e. $B(r):=\{x\in \R^N\ |\ |x|\le r\}$. We introduce a cut-off function $\chi:\R^N\rightarrow \R_{\ge 0}$ satisfying $\chi\in C_0^{\infty}(\R^N)$ and  $\chi(x)=1$ for any $x\in B(r_0+\gamma(m_1)^{-1} T^{\gamma(m_1)})$.

We introduce a family of test functions $\{\Phi_{\lambda,m_1}(x,t)\}_{\lambda>0}$, whose component $\Phi_{\lambda,m_1}:\R^N\times \R_{\ge 0}\rightarrow \R_{\ge 0}$ is given by $\Phi_{\lambda,m_1}(x,t):=w_{\lambda,m_1}(x,t)\chi(x)\{\eta_R(t)\}^k\in C_0^{\infty}(\R^N\times[0,T))$. Here $w_{\lambda,m_1}(x,t):=y_{\lambda,m_1}(t)\varphi_{\lambda}(x)$ is defined by (\ref{testsub}).

Since for any $\lambda\in (0,1)$, by a direct computation, the equation
\[
   \partial_t\Phi_{\lambda,m_1}(x,t)=\{\eta_R(t)\}^{k-1}\chi(x)\left\{\left(\partial_tw_{\lambda,m_1}(x,t)\right)\eta_R(t)+\frac{k}{R}w_{\lambda,m_1}(x,t)\eta'\left(\frac{t}{R}\right)\right\}
\]
holds for any $(x,t)\in \R^N\times[0,T)$, by the identities $y'_{\lambda,m_1}(t)=\lambda^{\frac{1}{\gamma(m_1)}}y'_1(\lambda^{\frac{1}{\gamma(m_1)}}t)$ and (\ref{y'(0)}),  \begin{equation}
\label{dPhi(0)}
    \lim_{\lambda\rightarrow+0}\partial_t\Phi_{\lambda,m_1}(x,0)=\lim_{\lambda\rightarrow+0} C_0\lambda^{\frac 1\gamma}y'_1(\lambda^{\frac 1\gamma} t)\varphi_\lambda(x) \chi(x) =0
\end{equation} holds for any $x\in \R^N$. For any $\lambda\in (0,1)$, by the identity (\ref{propy0}), the equation $\Phi_{\lambda,m_1}(x,0)=\varphi_{\lambda}(x)\chi(x)$ holds for any $x\in \R^N$. By the assumption on the data $g_1$ and the Lebesgue convergence theorem, the estimates
\begin{align*}
    &\lim_{\lambda\rightarrow+0}\int_{\R^N}g_1(x)\Phi_{\lambda,m_1}(x,0)dx
    =\lim_{\lambda\rightarrow+0}\int_{B(r_0)}g_1(x)\varphi_{\lambda}(x)=\int_{\R^N}g_1(x)dx=I[g_1]>0
\end{align*}
holds. Thus there exists a constant $\lambda_0=\lambda_0(I[g_1])\in (0,1)$ such that the inequality
\begin{equation}
\label{estg_1}
\int_{\R^N}g_1(x)\Phi_{\lambda_0,m_1}(x,0)dx>\frac{1}{2}I[g_1]>0
\end{equation}
holds. By a direct computation and the equation (\ref{freesol1}), the identity
\begin{align}
    &\partial_t^2\Phi_{\lambda_0,m_1}(x,t)-t^{m_1}\Delta \Phi_{\lambda_0,m_1}(x,t)\notag\\
    &=\frac{k}{R}\{\eta_R(t)\}^{k-2}\varphi_{\lambda_0}(x)\left[2(\partial_ty_{\lambda_0,m_1}(t))\eta_R(t)\eta'\left(\frac{t}{R}\right)+\frac{1}{R}y_{\lambda_0,m_1}(t)\left\{(k-1)\left(\eta'\left(\frac{t}{R}\right)\right)^2+\eta_R(t)\eta''\left(\frac{t}{R}\right)\right\}\right]
    \label{Phi_0}
\end{align}
holds for any $(x,t)\in B(r_0+\gamma(m_1)^{-1}T^{\gamma(m_1)})\times [0,T)$. By the identity (\ref{Phi_0}) and Lemma \ref{propy}, there exists a positive constant $C=C(m_1,k)>0$ such that for any $(x,t)\in B(r_0+\gamma(m_1)^{-1}T^{\gamma(m_1)})\times \left(\frac{R}{2},R\right)$, the estimate
\begin{align}
\label{3-5-10}
    \left|\partial_t^2\Phi_{\lambda_0,m_1}(x,t)-t^{m_1}\Delta \Phi_{\lambda_0,m_1}(x,t)\right|
    \le \frac{C}{R}\lambda_0^{\frac{1}{m_1+2}-\frac{1}{2}}(\lambda_0+1)\{\eta_R(t)\}^{k-2}\varphi_{\lambda_0}(x)t^{\frac{m_1}{4}}\exp\left(-\gamma(m_1)^{-1}\lambda_0t^{\gamma(m_1)}\right)
\end{align}
holds. Noting that the estimate $T_2\ge 2\{\gamma(m_1)\max(\mathcal{R}_0(N,q',\lambda_0)-r_0,1+r_0)\}^{\frac{1}{\gamma(m_1)}}$ holds, we can apply Corollary \ref{estphi} with $\mathcal{R}=r_0+\gamma(m_1)^{-1}t^{\gamma(m_1)}$ and $\theta=q'$ to get
\begin{align}
    &\int_{\frac{R}{2}}^R\int_{B(r_0+\gamma^{-1}(m_1)t^{\gamma(m_1)})}\left\{\varphi_{\lambda_0}(x)t^{\frac{m_1}{4}}e^{-\frac {\lambda_0}{\gamma(m_1)} t^{\gamma(m_1)}}\right\}^{q'}dxdt\notag\\
    &\le C\lambda_0^{-\frac{N-1}{2}q'-1}e^{\lambda_0q'r_0}\int_{\frac{R}{2}}^{R}\left(1+r_0+\gamma^{-1}(m_1)t^{\gamma(m_1)}\right)^{(N-1)\left(1-\frac{q'}{2}\right)}t^{\frac{m_1q'}{4}}dt\notag\\
    &\le CR^{\left(\frac{m_1}{4}-\frac{m_1+2}{2}\cdot\frac{N-1}{2}\right)q'+\frac{m_1+2}{2}(N-1)+1},
    \label{3-6-10}
\end{align}
for some positive constant $C=C(N,\lambda_0,m_1,r_0,q)>0$, which is independent of $R$. By the definition of the weak solution to (\ref{weaksol}), the finite propagation speed (\ref{FPS}), the integration by parts, $\supp \eta\subset [0,1]$, $\supp \eta'\subset [1/2,1]$, the estimates (\ref{dPhi(0)}), (\ref{estg_1}), (\ref{3-5-10}) and (\ref{3-6-10}), and $k\ge 2q'$ and the H\"older inequality, the estimates
\begin{equation}\label{equ3}
\begin{aligned}
\frac{\varepsilon}{2}I[g_1]&\le \varepsilon\int_{\R^N}g_1\Phi_{\lambda_0,m_1}(0)dx-\varepsilon\int_{\R^N}f_1\partial_t\Phi_{\lambda_0,m_1}(0)dx+\int_0^R\int_{\R^N}G_1(v(t))\Phi_{\lambda_0,m_1}(t) dxdt\\
&\le \int_0^R\int_{\R^N} u(t) \left\{\partial_t^2\Phi_{\lambda_0,m_1}(t)-t^{m_1} \Delta\Phi_{\lambda_0,m_1}(t)\right\} dxdt\\
&\le \frac{C}{R}\lambda_0^{\frac{1}{m_1+2}-\frac{1}{2}}(\lambda_0+1)\int_{\frac{R}{2}}^R\int_{B(r_0+\gamma^{-1}(m_1)t^{\gamma(m_1)})}|u(t)|\{\eta_R(t)\}^{k-2}\varphi_{\lambda_0}(x)t^{\frac{m_1}{4}}e^{-\frac {\lambda_0}{\gamma(m_1)} t^{\gamma(m_1)}}dxdt\\
&\leq \frac CR \lambda_0^{\frac{1}{m_1+2}-\frac{1}{2}}(\lambda_0+1)\left(\int_{\frac R2}^R\int_{\R^N} |u(t)|^q \{\eta_R(t)\}^{k}dxdt\right)^{\frac 1q}\\
&\ \ \ \ \ \ \ \ \ \ \ \ \ \ \ \ \ \  \times\left(\int_{\frac R2}^R\int_{B(r_0+\gamma^{-1}(m_1)t^{\gamma(m_1)})}\left\{\varphi_{\lambda_0}(x) t^{\frac {m_1}4} e^{-\frac {\lambda_0}{\gamma(m_1)} t^{\gamma(m_1)}}\right\}^{q'} dxdt\right)^{\frac 1{q'}}\\
&\le C\left(\int_{\frac R2}^R\int_{\R^N}G_2(u(t)) \{\eta_R(t)\}^{k}dxdt\right)^{\frac 1q}R^{\gamma_{l}(N,m_1,q)}
\end{aligned}
\end{equation}
hold, where $C>0$ is a constant independent of $R$. Here for $m\in \R$ with $m\ge 0$ and $\theta\in (1,\infty)$, $\gamma_{l}=\gamma_{l}(N,m,\theta)$ is defined by
\[
  \gamma_{l}(N,m,\theta):=\frac m4+\frac{(m+2)(N-1)}{4}-\frac{(m+2)(N-1)}{2\theta}-\frac{1}{\theta}.
\]



We define a test function $\Phi:\R^N\times [0,T)\rightarrow \R_{\ge 0}$ given by $\Phi(x,t):=\chi(x)\{\eta_R(t)\}^{k}\in C_0^{\infty}(\R^N\times[0,T))$. Since the identity $\chi(x)=1$ holds for any $x\in B(r_0+\phi(T))$, by the direct computation, the identities
\begin{align*}
    \Delta\Phi(x,t)&=0,\ \ \ \ \partial_t\Phi(x,t)=\frac{k}{R}\{\eta_R(t)\}^{k-1}\eta'\left(\frac{t}{R}\right)\\
    \partial_t^2\Phi(x,t)&=\frac{k}{R^2}\{\eta_R(t)\}^{k-2}\left[(k-1)\left\{\eta'\left(\frac{t}{R}\right)\right\}^2+\eta_R(t)\eta''\left(\frac{t}{R}\right)\right]
\end{align*}
hold for any $(x,t)\in B(r_0+\phi(T))\times[0,T)$. By the identity $\eta'(0)=0$, the equation $\partial_t\Phi(x,0)=0$ holds for any $x\in B(r_0)$. By changing variables and the estimate $R\ge T_2$, the estimates
\begin{align}
    \int_{\frac{R}{2}}^R\int_{B(r_0+\phi(t))}dxdt&=C\int_{\frac{R}{2}}^R\{r_0+\phi(t)\}^Ndt
    \le C\int_{\frac{R}{2}}^Rt^{\gamma(m_1)N}dt\notag\\
    &\le CR^{\gamma(m_1)N+1}
    \label{measu}
\end{align}
hold, where $C=C(r_0,m_1,N)>0$ is a positive constant. Noting that the identity $\Phi(x,0)=1$ holds for any $x\in B(r_0)$ due to $\eta(0)=1$, by the definition of the weak solution (\ref{weaksol}), integration by parts, the inequality $k\ge 2q'$, the H\"older inequality, the assumption (\ref{nonlinearity}) and the inequality (\ref{measu}), the estimates
\begin{align}
    \varepsilon I[g_1]+\int_0^R\int_{B(r_0+\phi(t))}&G_1(v(t))\Phi(x,t)dxdt
    \le\int_0^Ru(t)\partial_t^2\Phi(x,t)dxdt
    \le \frac{C}{R^2}\int_{\frac{R}{2}}^R\int_{B(r_0+\phi(t))}|u(t)|\{\eta_R(t)\}^{\frac{k}{q}}dxdt\notag\\
    &\le \frac{C}{R^2}\left(\int_{\frac{R}{2}}^R\int_{B(r_0+\phi(t))}|u(t)|^q\{\eta_R(t)\}^kdxdt\right)^{\frac{1}{q}}
    \left(\int_{\frac{R}{2}}^R\int_{B(r_0+\phi(t))}dxdt\right)^{\frac{1}{q'}}\notag\\
    &\le CR^{(\gamma(m_1)N+1)\frac{1}{q'}-2}\left(\int_{\frac{R}{2}}^R\int_{B(r_0+\phi(t))}G_2(u(t))\Phi(x,t)dxdt\right)^{\frac{1}{q}}
    \label{basicest}
\end{align}
hold for some positive constant $C=C(r_0,m_1,N,k,q)$. The second inequality of the first line in (\ref{basicest}) comes from the estimate
$$\partial_t \Phi(x,t) \leq \frac C{R^2}\eta_R(t)^{k-2}\leq \frac C{R^2}\eta_R(t)^{\frac kq},$$
for $k\ge 2q'$, since $\eta'\left(\frac{t}{R}\right)$ and $\eta''\left(\frac{t}{R}\right)$ are bounded. In the similar manner as the proof of the estimate (\ref{basicest}), the inequality
\begin{equation}\label{equ2}
    \varepsilon I[g_2]+\int_0^R\int_{B(r_0+\phi(t))}G_2(u(t))\Phi(x,t)dxdt
    \le CR^{(\gamma(m_1)N+1)\frac{1}{p'}-2}\left(\int_{\frac{R}{2}}^R\int_{B(r_0+\phi(t))}G_1(v(t))\Phi(x,t)dxdt\right)^{\frac{1}{p}}
\end{equation}
holds for some positive constant $C=C(r_0,m_1,N,k,p)$.
 By the estimates (\ref{basicest})-(\ref{equ2}) and the assumptions $I[g_1]>0$ and $I[g_2]>0$, the inequalities
\begin{align}
\label{basicest0}
\varepsilon I[g_1]+
    \int_0^R\int_{B(r_0+\phi(t))}G_1(v(t))\Phi(x,t)dxdt&\le CR^{(\gamma(m_1)N+1)\frac{pq-1}{pq}-\frac{2(q+1)}{q}}\left(\int_{\frac{R}{2}}^R\int_{B(r_0+\phi(t))}G_1(v(t))\Phi(x,t)dxdt\right)^{\frac{1}{pq}}\\
    \varepsilon I[g_2]+
    \int_0^R\int_{B(r_0+\phi(t))}G_2(u(t))\Phi(x,t)dxdt&\le CR^{(\gamma(m_1)N+1)\frac{pq-1}{pq}-\frac{2(p+1)}{p}}\left(\int_{\frac{R}{2}}^R\int_{B(r_0+\phi(t))}G_2(u(t))\Phi(x,t)dxdt\right)^{\frac{1}{pq}}\label{3-12}
\end{align}
holds. By combining these estimates and the Young inequality, the estimates
\begin{align}
\label{basicest2}
\varepsilon I[g_1]+\left(1-\frac{1}{pq}\right)\int_0^R\int_{B(r_0+\phi(t))}G_1(v(t))\Phi(x,t)dxdt&\le CR^{-F_{GG}(N,m_1,p,q)}\\
   \varepsilon I[g_2]+\left(1-\frac{1}{pq}\right)\int_0^R\int_{B(r_0+\phi(t))}G_2(u(t))\Phi(x,t)dxdt&\le CR^{-F_{GG}(N,m_1,q,p)}
   \label{basicest3}
\end{align}
holds for any $R\in (T_2,T)$.

\subsection{Proof of Theorem \ref{Main}}
\ \ In this subsection, we complete the proof of Theorem \ref{Main}. By symmetry, we may assume that $m_1>m_2$, which implies that the identities $\Omega_{GG}(N,m_1,m_2,p,q)=\Gamma_{GG}(N,m_1,p,q)$ and $\Omega_{SS}(N,m_1,m_2,p,q)=F_{SS}(N,m,p,q)$ hold. We divide the proof into the three cases (i) $\Gamma_{GG}(N,m_1,p,q)>0$, (ii) $F_{SS}(N,m_1,p,q)>0$, and (iii) $F_{SS}(N,m_1,p,q)=0$.
\subsubsection{$\Gamma_{GG}(N,m_1,p,q)>0$}

\begin{proof}[Proof of Theorem \ref{Main} in the case $\Gamma_{GG}(N,m_1,p,q)>0$ ]
First we consider the case of $p\le q$. Then the identity $\Gamma_{GG}(N,m_1,p,q)=F_{GG}(N,m_1,q,p)$ holds. By the estimate (\ref{basicest3}), the estimate
$$\varepsilon I[g_2]\le R^{-F_{GG}(N,m_1,q,p)}$$
holds for any $R\in (T_2,T)$. By the assumption $F_{GG}(N,m_1,q,p)>0$, the estimate $T(\varepsilon)\le A\varepsilon^{-F_{GG}(N,m_1,q,p)^{-1}}$ holds for some $A>0$ independent of $\varepsilon$. Next we consider the opposite case $p>q$. Then the identity $\Gamma_{GG}(N,m_1,p,q)=F_{GG}(N,m_1,p,q)$ holds. By the estimate (\ref{basicest}), the estimate $\varepsilon I[g_1]\le R^{-F_{GG}(N,m_1,p,q)}$ holds for any $R\in (T_2,T)$. By the assumption $F_{GG}(N,m_1,p,q)>0$, the estimate $T(\varepsilon)\le A\varepsilon^{-F_{GG}(N,m_1,p,q)^{-1}}$ holds. Therefore the estimate $T(\varepsilon)\le A\varepsilon^{-\Gamma_{GG}(N,m_1,p,q)^{-1}}$ holds, which completes the proof of theorem in the case of $\Gamma_{GG}(N,m_1,p,q)>0$
\end{proof}

\subsubsection{$F_{SS}(N,m_1,p,q)>0$}
\label{sub}

\begin{proof}[Proof of Theorem \ref{Main} in the case $F_{SS}(N,m_1,p,q)>0$]
By the estimate (\ref{basicest3}) and the assumption $I[g_2]\ge 0$, the inequality
\begin{equation}
\label{3-15-a}
    \int_0^R\int_{B(r_0+\phi(t))}G_2(u(t))\Phi(x,t)dxdt\le CR^{-F_{GG}(N,m_1,q,p)}
\end{equation}
holds for any $R\in (T_2,T)$. Noting that the identity
\[
    -\frac{1}{q}F_{GG}(N,m_1,q,p)+\gamma_{l}(N,m_1,q)=-F_{SS}(N,m_1,p,q)
\]
holds, by combining the estimates (\ref{equ3}) and (\ref{3-15-a}), the inequality
\begin{equation}
\frac{\varepsilon}{2}I[g_1]
\le C R^{-F_{SS}(N,m_1,p,q)}
\end{equation}
holds for any $R\in (T_2,T)$. By the assumption $F_{SS}(N,m_1,p,q)>0$, the estimate
\begin{equation}
R\leq C \varepsilon^{-F_{SS}(N,m_1,p,q)^{-1}}
\end{equation}
holds for any $R\in (T_2,T)$, which implies that the estimate $T(\varepsilon)\le A\varepsilon^{-F_{SS}(N,m_1,p,q)^{-1}}$ holds for some $A>0$ independent of $\varepsilon$. This completes the proof of the theorem in the case of $F_{SS}(N,m_1,p,q)>0$.
\end{proof}

\subsubsection{$F_{SS}(N,m_1,p,q)=0$ and $N\geq 2$}
\label{Cri}
\ \ In this subsubsection, we consider the critical case $F_{SS}(N,m_1,p,q)=0$ and $N\geq 2$. Then we introduce a test function $L^{\infty}(0,\infty)\ni\eta^*:\R_{\ge 0}\rightarrow \R_{\ge 0} $ given by
\[
   \eta^*(s):=
   \begin{cases}
   0, &s\in [0,1/2),\\
\eta(s), &s\in[1/2,\infty),
   \end{cases}
\]
where $\eta\in C_0^{\infty}([0,\infty))$ is defined by (\ref{eta}). For $\sigma>0$, we also define $ L^{\infty}(0,\infty)\ni \eta_{\sigma}^*:\R_{\ge 0}\rightarrow \R_{\ge 0}$ as $\eta_{\sigma}^*(s):=\eta^*(\frac{s}{\sigma})$ for $s\in [0,\infty)$. To prove the upper estimate (\ref{upper}) of the lifespan in the critical case, we apply the following fact for a system of ordinary differential inequalities (see Lemma 3.10 in \cite{ISW19}):
\begin{lemma}{\cite[Lemma3.10]{ISW19}}
Let $T>2$, $t_0\in (2,T)$, $Y \in C^1([t_0,T))$ be non-negative, $\delta, M_1, M_2>0$ and $p_1, p_2>1$ with $p_2<p_1+1$. We assume that the estimates
\begin{equation}\label{lemma3.1}
\begin{cases}
\delta\leq M_1 t Y'(t), &\quad \quad \quad t \in (t_0,T),\\
Y(t)^{p_1}\leq M_2 t(\log t)^{p_2-1}Y'(t),&\quad \quad \quad t \in (t_0,T),
\end{cases}
\end{equation}
hold. Then there exist positive constants $\delta_0$ and $M_3$ independent of $\delta$ such that the estimate holds:
$$
T\leq \exp\left(M_3 \delta^{-\frac {p_1-1}{p_1-p_2+1}}\right).
$$
\end{lemma}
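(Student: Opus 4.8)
The plan is to decouple the two hypotheses: the first inequality forces $Y$ to grow at least logarithmically in $t$, while the second is a Bernoulli-type differential inequality whose integration yields a finite upper bound for $T$ once the logarithmic growth is inserted. The cleanest route is to pass to the variable $\tau=\log t$, which converts both bounds into autonomous-looking estimates, and then to integrate the Bernoulli inequality from a base point chosen \emph{proportional} to the endpoint $\log T$, so that the growth of $Y$ accumulated up to that base point is fully exploited.

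First I would set $\tau=\log t$ and write $\dot Y=dY/d\tau=tY'(t)$. The two hypotheses become
\begin{equation}
\dot Y(\tau)\ge \frac{\delta}{M_1}\quad\text{and}\quad \dot Y(\tau)\ge \frac{Y(\tau)^{p_1}}{M_2\,\tau^{p_2-1}},\qquad \tau\in(\tau_0,\mathcal{T}),
\end{equation}
where $\tau_0=\log t_0$ and $\mathcal{T}=\log T$. Integrating the first bound from $\tau_0$ gives $Y(\tau)\ge (\delta/M_1)(\tau-\tau_0)$, so that for $\tau\ge 2\tau_0$ the clean lower bound $Y(\tau)\ge (\delta/2M_1)\tau$ holds. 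In particular $Y>0$ and $Y$ is increasing on $(\tau_0,\mathcal{T})$, which legitimizes the separation of variables below.

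Next I would integrate the second inequality in the form $\dot Y/Y^{p_1}\ge (M_2\tau^{p_2-1})^{-1}$ from the base point $\tau_1:=\mathcal{T}/2$ up to $\mathcal{T}$. Since $\int_{\tau_1}^{\mathcal{T}}\frac{\dot Y}{Y^{p_1}}\,d\tau=\frac{1}{p_1-1}\big(Y(\tau_1)^{-(p_1-1)}-Y(\mathcal{T}^-)^{-(p_1-1)}\big)\le \frac{1}{(p_1-1)Y(\tau_1)^{p_1-1}}$, I obtain
\begin{equation}
\frac{1}{(p_1-1)Y(\tau_1)^{p_1-1}}\ge \frac{1}{M_2}\int_{\tau_1}^{\mathcal{T}}\tau^{-(p_2-1)}\,d\tau\ge \frac{c_{p_2}}{M_2}\,\mathcal{T}^{\,2-p_2},
\end{equation}
where the last step uses the elementary estimate $\int_{\mathcal{T}/2}^{\mathcal{T}}\tau^{-(p_2-1)}\,d\tau\ge c_{p_2}\mathcal{T}^{2-p_2}$ (with $\mathcal{T}^{2-p_2}$ read as $1$ when $p_2=2$). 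Inserting $Y(\tau_1)\ge (\delta/4M_1)\mathcal{T}$ and rearranging turns this into $\mathcal{T}^{\,p_1-p_2+1}\le C\,\delta^{-(p_1-1)}$; since $p_1-p_2+1>0$ by the hypothesis $p_2<p_1+1$, I may solve for $\mathcal{T}=\log T$ to conclude $T\le\exp\big(M_3\delta^{-(p_1-1)/(p_1-p_2+1)}\big)$.

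The step requiring the most care, and the only place where the regimes $p_2<2$, $p_2=2$, $p_2>2$ must be distinguished, is the lower bound $\int_{\mathcal{T}/2}^{\mathcal{T}}\tau^{-(p_2-1)}\,d\tau\ge c_{p_2}\mathcal{T}^{2-p_2}$: for $p_2\ne 2$ one computes the primitive $\tau^{2-p_2}/(2-p_2)$ and checks that the evaluated difference has the claimed sign and order, while $p_2=2$ gives the constant $\log 2$. The choice $\tau_1=\mathcal{T}/2$ is essential, since a \emph{fixed} base point would discard the factor $\mathcal{T}^{p_1-1}$ coming from $Y(\tau_1)^{p_1-1}$ and produce a wrong exponent. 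Finally I would record the threshold $\delta_0$: the bound $Y(\tau_1)\ge(\delta/4M_1)\mathcal{T}$ requires $\mathcal{T}\ge 4\tau_0$, and for $\delta\le\delta_0$ the asserted right-hand side $\exp\big(M_3\delta^{-(p_1-1)/(p_1-p_2+1)}\big)$ exceeds $t_0^4$, so the remaining possibility $T<t_0^4$ is covered trivially.
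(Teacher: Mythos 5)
Your argument is correct. Note that the paper does not prove this lemma at all --- it is imported verbatim from \cite[Lemma 3.10]{ISW19} --- so there is no internal proof to compare against; your write-up is essentially the standard argument behind that cited result (logarithmic lower growth from the first inequality, fed into the integrated Bernoulli inequality with a base point proportional to $\log T$), merely rephrased in the variable $\tau=\log t$. All the delicate points check out: the case split at $p_2=2$ for the integral $\int_{\mathcal{T}/2}^{\mathcal{T}}\tau^{1-p_2}\,d\tau$, the use of $p_2<p_1+1$ to solve for $\mathcal{T}$, and the role of $\delta_0$ in absorbing the regime $\mathcal{T}<4\tau_0$. The only cosmetic gap is that $\tau_1=\mathcal{T}/2$ presupposes $\mathcal{T}<\infty$; one should run the argument on an arbitrary $T'\in(t_0,T)$ and let $T'\uparrow T$, which simultaneously yields $T<\infty$ and the stated bound.
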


 For a function $\mathcal{W}\in L_{loc}^1([0,T);L^1(\R^N))$ such that the estimate $\mathcal{W}(x,t)\ge 0$ holds for a.e.\ $(x,t)\in \R^N\times[0,T)$, we introduce a function $Y[\mathcal{W}]:[0,T)\rightarrow \R_{\ge 0}$ defined by
\begin{equation}
\label{YW}
Y[\mathcal{W}](R):=\int_0^R\left[\int_0^T\int_{\R^N}\mathcal{W}(x,t)\{\eta_\sigma^*(t)\}^{k}dxdt\right]\sigma^{-1}d\sigma,\ \ \ \text{for}\ R\in [0,T),
\end{equation}
where $k> 0$. The following estimates for the function $Y[\mathcal{W}]$ were firstly proved in \cite[Proposition 2.1]{IS19}, and are utilized below to prove that the function $\phi:=Y[\mathcal{W}]$ satisfies the estimates (\ref{lemma3.1}) (see also \cite[Lemma3.9]{ISW19}).
\begin{lemma}{\cite[Proposition 2.1]{IS19}}
\label{lemcri}
Let $T>0$, $\mathcal{W}\in L_{loc}^1([0,T);L^1(\R^N))$ such that the estimate $\mathcal{W}(x,t)\ge 0$ holds for a.e. $(x,t)\in \R^N\times [0,T)$, and $Y[\mathcal{W}]:[0,T)\rightarrow \R_{\ge 0}$ be the function given by (\ref{YW}). Then $Y[\mathcal{W}]$ belongs to $C^1((0,T))$ and the estimates
\begin{align*}
    \frac{d}{dR}Y[\mathcal{W}](R)&=R^{-1}\int_0^T\int_{\R^N}\mathcal{W}(x,t)\{\eta_R^*(t)\}^kdxdt,\\
    Y[\mathcal{W}](R)&\le \int_0^T\int_{\R^N}\mathcal{W}(x,t)\{\eta_R^*(t)\}^kdxdt
\end{align*}
hold for any $R\in [0,T)$.
\end{lemma}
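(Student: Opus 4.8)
The plan is to isolate the two assertions. Writing
\[
g(\sigma):=\int_0^T\int_{\R^N}\mathcal{W}(x,t)\{\eta_\sigma^*(t)\}^k\,dx\,dt,
\]
so that $Y[\mathcal{W}](R)=\int_0^R g(\sigma)\,\sigma^{-1}\,d\sigma$, the derivative identity becomes the fundamental theorem of calculus once $\sigma\mapsto\sigma^{-1}g(\sigma)$ is shown to be continuous, while the inequality reduces, via Tonelli, to a one--variable estimate for the kernel obtained by integrating $\{\eta_\sigma^*(t)\}^k$ in $\sigma$.

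First I would establish continuity of $g$ on $(0,T)$. Because $\eta^*$ is supported in $[1/2,1]$ and bounded by $1$, the function $\eta_\sigma^*$ is supported in $[\sigma/2,\sigma]$, so for $\sigma$ in a compact subinterval $[\sigma_1,\sigma_2]\subset(0,T)$ the integrand is dominated by $\mathcal{W}(x,t)\mathbf{1}_{[\sigma_1/2,\sigma_2]}(t)$, which lies in $L^1(\R^N\times[\sigma_1/2,\sigma_2])$ since $\mathcal{W}\in L^1_{loc}([0,T);L^1(\R^N))$. For almost every fixed $(x,t)$ the map $\sigma\mapsto\{\eta^*(t/\sigma)\}^k$ is continuous except where $t/\sigma$ meets the jump of $\eta^*$ at $1/2$, i.e.\ only at the single value $\sigma=2t$; dominated convergence then gives continuity of $g$. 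Hence $\sigma^{-1}g(\sigma)$ is continuous on $(0,T)$, the integral defining $Y[\mathcal{W}]$ converges (by Tonelli it is at most $c_k\int_0^R\int_{\R^N}\mathcal{W}$, where $c_k:=\int_{1/2}^1\{\eta^*(\tau)\}^k\tau^{-1}\,d\tau$ is the uniform kernel bound from the next step), and the fundamental theorem of calculus yields $Y[\mathcal{W}]\in C^1((0,T))$ together with $\frac{d}{dR}Y[\mathcal{W}](R)=R^{-1}g(R)$.

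For the inequality I would apply Tonelli (legitimate since $\mathcal{W}\ge0$) to write
\[
Y[\mathcal{W}](R)=\int_0^T\int_{\R^N}\mathcal{W}(x,t)\,\mathcal{K}(t,R)\,dx\,dt,\qquad \mathcal{K}(t,R):=\int_0^R\{\eta^*(t/\sigma)\}^k\,\frac{d\sigma}{\sigma},
\]
thereby reducing everything to the pointwise kernel bound $\mathcal{K}(t,R)\le\{\eta^*(t/R)\}^k$. The substitution $\tau=t/\sigma$ converts the kernel into $\mathcal{K}(t,R)=\int_{t/R}^\infty\{\eta^*(\tau)\}^k\,\tau^{-1}\,d\tau$, an integral concentrated on $[1/2,1]$. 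When $t/R\in[1/2,1)$, the monotonicity of $\eta^*$ on its support gives $\{\eta^*(\tau)\}^k\le\{\eta^*(t/R)\}^k$ throughout the range of integration, and the surviving factor $\int_{t/R}^1\tau^{-1}\,d\tau=\log(R/t)$ is bounded by $\log 2<1$ precisely because the support of $\eta^*$ occupies an interval whose endpoints differ by the factor $2$; this yields $\mathcal{K}(t,R)\le\{\eta^*(t/R)\}^k$ in this regime, and the case $t/R\ge1$ is trivial since both sides vanish. Integrating against $\mathcal{W}$ then produces the second estimate.

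The regularity and the passage to the kernel are routine, so the genuine obstacle is the kernel estimate, and within it the regime where $t$ is small compared with $R$, namely $t/R<1/2$, in which the right--hand side $\{\eta^*(t/R)\}^k$ degenerates while $\mathcal{K}(t,R)$ reduces to the fixed quantity $\int_{1/2}^1\{\eta^*(\tau)\}^k\tau^{-1}\,d\tau$. This is the delicate case that forces the precise normalisation of the cut-off: one must exploit that $\eta^*$ vanishes below $1/2$ and that its support is confined to $[1/2,1]$, keeping the logarithmic factor below $\log 2$. I would therefore organise the verification of $\mathcal{K}(t,R)\le\{\eta^*(t/R)\}^k$ by splitting into the cases $t/R\ge1$, $1/2\le t/R<1$, and $t/R<1/2$, tracking the jump of $\eta^*$ at $1/2$ in the last, and it is exactly there that I would expect to spend the bulk of the effort, following the treatment in \cite[Proposition 2.1]{IS19}.
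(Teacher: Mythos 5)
Your reduction to the kernel bound is the right move, and the first half of your argument (continuity of $g$ via dominated convergence, convergence of the $\sigma$-integral near $\sigma=0$, and the fundamental theorem of calculus giving $\frac{d}{dR}Y[\mathcal{W}](R)=R^{-1}g(R)$) is correct. The gap is in the second half, and it is not a matter of ``spending more effort'' on the regime $t/R<1/2$: the pointwise bound $\mathcal{K}(t,R)\le\{\eta^*(t/R)\}^k$ that you defer to that case is simply false there. Your own computation shows that for $t<R/2$ one has $\mathcal{K}(t,R)=\int_{1/2}^{1}\{\eta^*(\tau)\}^{k}\tau^{-1}\,d\tau=c_k>0$ (a strictly positive constant, since $\eta^*=\eta$ is positive on a neighbourhood of $\tau=1/2$ inside $[1/2,1)$), whereas the right-hand side $\{\eta^*(t/R)\}^{k}$ vanishes because $\eta^*\equiv 0$ on $[0,1/2)$. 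Consequently the inequality $Y[\mathcal{W}](R)\le\int_0^T\int_{\R^N}\mathcal{W}(x,t)\{\eta_R^*(t)\}^{k}\,dx\,dt$ as literally stated fails whenever $\mathcal{W}$ has positive mass in $\{t<R/2\}$: taking $\mathcal{W}$ supported there makes the left side equal to $c_k\int_0^T\int_{\R^N}\mathcal{W}\,dx\,dt>0$ while the right side is $0$. No normalisation of the cut-off can rescue this case.

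What is true --- and what the paper actually uses downstream, where the test function $\Phi_{\beta}=W_{\beta}\chi\,\eta_R^{k}$ carries $\eta_R$ rather than $\eta_R^{*}$ --- is the estimate with $\eta_R$ in place of $\eta_R^{*}$ on the right-hand side, i.e.\ $Y[\mathcal{W}](R)\le\int_0^T\int_{\R^N}\mathcal{W}(x,t)\{\eta_R(t)\}^{k}\,dx\,dt$; the displayed statement contains a misprint. This corrected bound follows from exactly the three-case analysis you set up: for $t<R/2$ the right-hand kernel is $\{\eta(t/R)\}^{k}=1$ while $\mathcal{K}(t,R)=c_k\le\int_{1/2}^{1}\tau^{-1}\,d\tau=\log 2<1$; for $1/2\le t/R<1$ your monotonicity argument gives $\mathcal{K}(t,R)\le\{\eta(t/R)\}^{k}\log(R/t)\le\{\eta(t/R)\}^{k}\log 2$; and for $t\ge R$ both sides vanish. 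So your proof of the derivative identity stands, and your proof of the inequality closes once $\eta_R^{*}$ is replaced by $\eta_R$ --- but as written, the case you postponed cannot be completed, and recognising that the target inequality must be weakened there is precisely the missing step.
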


\begin{proof}[Proof of Theorem \ref{Main} in the case $F_{SS}(N,m_1,p,q)=0$]
We introduce a function $\mathcal{W}\in L_{loc}^1([0,T);L^1(\R^N))$ given by $\mathcal{W}:=G_1(v)W_{\beta,m_1}$, where $W_{\beta,m_1}$ is given by (\ref{testcri}) with $m=m_1$. From the assumptions of the nonlinear function $G_1$ and the function $v$, we see that the function $\mathcal{W}$ belongs to $L_{loc}^1([0,T);L^1(\R^N))$ and the estimate $\mathcal{W}(x,t)\ge 0$ holds for a.e. $(x,t)\in \R^N\times[0,T)$, which enables us to apply Lemma \ref{lemcri}. We introduce a function $Y_2:[0,T)\rightarrow \R_{\ge 0}$ given by $Y_2(R):=Y[G_1(v)W_{\beta,m_1}](R)$ for $R\in [0,T)$, where $Y[\cdot]$ is given by (\ref{YW}).
By combining the estimates (\ref{equ3}) and (\ref{equ2}) and the assumptions $I[g_1]>0$, $I[g_2]>0$ and $F_{SS}(N,m_1,p,q)=0$, the inequalities
\begin{align*}
    \left(\frac{\varepsilon}{2}I[g_1]\right)^{pq}\le CR^{-\left(\frac{m_1+2}{4}N-\frac{1}{2}\right)+\frac{m_1+2}{2q}}\int_{\frac{R}{2}}^R\int_{B(r_0+\phi(t))}G_1(v(t))\{\eta_R(t)\}^kdxdt
\end{align*}
Thus, applying \eqref{Pro2.4.2} with $\beta=\frac {N}2-\frac 1{m_1+2}-\frac 1q$, we get
\begin{equation}
\begin{aligned}
\left(\frac{\varepsilon}{2}I[g_1]\right)^{pq}
&\leq C \int_0^T\int_{\R^N} G_1(v(t))W_{\beta,m_1}(x,t) \{\eta_R(t)\}^{k}dxdt,
\end{aligned}
\end{equation}
which implies that
\begin{equation}
\varepsilon^{pq}\leq CRY_2'(R).
\end{equation}

Choosing $\Phi_{\beta,m_1}(x,t)=W_{\beta,m_1}(x,t)\chi(x)\eta_R^k(t)$ in the first equation of \eqref{weaksol} and applying integration by parts, we get
\begin{equation}
\label{3-21}
\begin{aligned}
&\varepsilon\int_{\R^N}g_1(x)\Phi_{\beta,m_1}(x,0)dx-\varepsilon\int_{\R^N}f_1(x)\partial_t\Phi_{\beta,m_1}(x,0)dx+\int_0^T\int_{\R^N}G_1(v(t))\Phi_{\beta,m_1} dxdt\\
&\le\int_0^T\int_{\R^N} u (\partial_t^2 \Phi_{\beta,m_1}-t^{m_1} \Delta\Phi_{\beta,m_1}) dxdt\\
&\leq C_1 R^{-1} \int_0^T\int_{\R^N} |u| \left(\exp{(t^{-\gamma}|x|)}t^{-\gamma(\beta+\frac 1\gamma)}+t^{\frac {m_1}2}W_{\beta+1}\right)(\eta_R^*)^{k-2}dxdt+C_2 R^{-2} \int_0^T\int_{\R^N} |u| W_{\beta}(\eta_R^*)^{k-2}dxdt\\
&\leq C R^{-1} \left(\int_0^T\int_{\R^N} |u|^q (\eta_R^*)^{2q'}dxdt\right)^{\frac 1q}\left(\int_{\frac R2}^R\int_{B(r_0+\frac 1\gamma t^\gamma)}\left(\exp{(t^{-\gamma}|x|)}t^{-\gamma(\beta+\frac 1\gamma)}\right)^{q'}dxdt\right)^{\frac 1{q'}}\\
&\quad +C R^{-1} \left(\int_0^T\int_{\R^N} |u|^q (\eta_R^*)^{2q'}dxdt\right)^{\frac 1q}\left(\int_{\frac R2}^R\int_{B(r_0+\frac 1\gamma t^\gamma)}\left(t^{\frac {m_1}2}W_{\beta+1}\right)^{q'}dxdt\right)^{\frac 1{q'}}\\
&\quad +C R^{-2} \left(\int_0^T\int_{\R^N} |u|^q (\eta_R^*)^{2q'}dxdt\right)^{\frac 1q}\left(\int_{\frac R2}^R\int_{B(r_0+\frac 1\gamma t^\gamma)}W_{\beta}^{q'}dxdt\right)^{\frac 1{q'}}
\end{aligned}
\end{equation}
by \eqref{2-19}.

Direct calculation gives
\begin{align}
    \int_{\frac R2}^R\int_{B(r_0+\frac 1\gamma t^\gamma)}\left(\exp{(t^{-\gamma}|x|)}t^{-\gamma(\beta+\frac 1\gamma)}\right)^{q'}dxdt
    &\leq C \int_{\frac R2}^R\int_{B(r_0+\frac 1\gamma t^\gamma)}t^{-\gamma(\beta+\frac 1\gamma)q'}dxdt\notag\\
    &\leq C R^{-(\frac {m_1+2}2 \beta +1)q'+\frac {m_1+2}2 N+1}\notag\\
    &\leq C R^{(\frac {m_1}4-\frac {N-1}2\frac {m_1+2}2)q'+\frac{m_1+2}2 (N-1)+1} \log(R),\notag
\end{align}
for $\beta=\frac {N}2-\frac 1{m_1+2}-\frac 1q$. It follows from \eqref{Pro2.4.3} that
$$
\begin{aligned}
&\int_{\frac R2}^R\int_{B(r_0+\frac 1\gamma t^\gamma)}\left(t^{\frac {m_1}2}W_{\beta+1}\right)^{q'}dxdt\\
&\leq C \int_{\frac R2}^R\int_{B(r_0+\frac 1\gamma t^\gamma)}\left(t^{\frac {m_1}2}\cdot t^{-\frac {m_1}4-\frac {(N-1)(m_1+2)}4}(r_0+\frac 1\gamma t^\gamma-|x|)^{-\beta-\frac 1{m_1+2}+\frac {N-2}2}\right)^{q'}dxdt\\
&\leq C \int_{\frac R2}^R t^{(\frac {m_1}4-\frac {(N-1)(m_1+2)}4)q'}\int_{B(r_0+\frac 1\gamma t^\gamma)}\left(1+\frac 1\gamma t^\gamma-|x|\right)^{(-\beta-\frac 1{m_1+2}+\frac {N-2}2)q'}dxdt\\
\end{aligned}
$$
Noting that $(-\beta-\frac 1{m_1+2}+\frac {N-2}2)q'=-1$, since $\beta=\frac {N}2-\frac 1{m_1+2}-\frac 1q$, we have
$$
\begin{aligned}
&\int_{\frac R2}^R\int_{B(r_0+\frac 1\gamma t^\gamma)}\left(t^{\frac {m_1}2}W_{\beta+1}\right)^{q'}dxdt\\
&\leq C \int_{\frac R2}^R t^{(\frac {m_1}4-\frac {N-1}2\frac {m_1+2}2)q'}\int_{B(r_0+\frac 1\gamma t^\gamma)}\left(1+\frac 1\gamma t^\gamma-|x|\right)^{-1}dxdt\\
&\leq C R^{(\frac {m_1}4-\frac {N-1}2\frac {m_1+2}2)q'+\frac{m_1+2}2 (N-1)+1} \log(R).
\end{aligned}
$$

Similarly, applying \eqref{Pro2.4.4}, we could obtain
\begin{align*}
    &\int_{\frac R2}^R\int_{B(r_0+\frac 1\gamma t^\gamma)}\left(W_{\beta}\right)^{q'}dxdt\\
    &\leq C \int_{\frac R2}^R\int_{B(r_0+\frac 1\gamma t^\gamma)} t^{-\frac {m_1}4 q'}(r_0+\frac 1\gamma t^{\gamma}+|x|)^{(-\beta+\frac 12-\frac 1{m_1+2})q'}dxdt\\
    &\leq C \int_{\frac R2}^R t^{-\frac {m_1}4 q'}(r_0+\frac 1\gamma t^{\gamma})^{-\frac {N-3}2 q'}dt \int_{B(r_0+\frac 1\gamma t^\gamma)} (r_0+\frac 1\gamma t^{\gamma}+|x|)^{-1}dx\\
    &\leq C \int_{\frac R2}^R t^{-\frac {m_1}4 q'}(r_0+\frac 1\gamma t^{\gamma})^{-\frac {N-3}2 q'} t^{\frac {m_1+2}2 (N-1)} \log(t)dt \\
    &\leq C R^{(\frac {m_1}4-\frac {N-1}2\frac {m_1+2}2)q'+\frac{m_1+2}2 (N-1)+1+q'} \log(R)
\end{align*}

Since $\lim_{\lambda\rightarrow 0}\sup_{x\in B(r_0)} \varphi_\lambda(x) =1$, there exists a constant $0<\kappa_0<<1$ such that $\varphi_\lambda(x)\sim 1$ for $\lambda\in [0,\kappa_0]$.
Thus, $\Phi_\beta(x,0)=W_\beta(x,0)=\int_0^{\kappa_0}\varphi_\lambda(x)\lambda^{\beta-1}d\lambda \sim \int_0^{\kappa_0}\lambda^{\beta-1}d\lambda$, which is a constant. It follows from the assumption $\int_{\R^N}g_1(x)dx>0$ that $\int_{\R^N}g_1(x)\Phi_\beta(x,0)dx>0$. Noting that $\partial_t\Phi_\beta(x,0)=0$, we have
$$
\begin{aligned}
&\left(\int_0^T\int_{\R^N}G_1(v(t))\Phi_\beta dxdt\right)^{pq}\\
&\leq C \left(\int_0^T\int_{\R^N} G_2(u(t)) (\eta_R^*)^{2q'}dxdt\right)^{p}\cdot R^{(\frac {m_1}4-\frac {N-1}2\frac {m_1+2}2)pq+(\frac{m_1+2}2 (N-1)+1)\frac {pq}{q'}-pq} (\log(R))^{\frac {pq}{q'}}\\
&\leq C \int_0^T\int_{\R^N} G_1(v(t)) (\eta_R^*)^{2p'}dxdt\cdot R^{(\frac {m_1}4-\frac {N-1}2\frac {m_1+2}2)pq+(\frac{m_1+2}2 (N-1)+1)\frac {pq}{q'}-pq+(\frac {m_1+2}2N-1)(p-1)-2} (\log(R))^{\frac {pq}{q'}}
\end{aligned}
$$
Since $F_{SS}(N,m_1,p,q)=-\frac {m_1+2}4-\frac {m_1}4-\frac {m_1}2\frac 1q+(pq-1)^{-1}(p+2+\frac 1q)=0$, direct calculation gives
$$
\begin{aligned}
&\left(\frac {m_1}4-\frac {N-1}2\frac {m_1+2}2\right)pq+(\frac{m_1+2}2 (N-1)+1)\frac {pq}{q'}-pq+(\frac {m_1+2}2N-1)(p-1)-2 \\
&=-\frac {m_1+2}2 \beta,
\end{aligned}
$$
with $\beta=\frac {N}2-\frac 1{m_1+2}-\frac 1q$. Therefore, by \eqref{Pro2.4.2},
$$
\begin{aligned}
&\left(\int_0^T\int_{\R^N}G_1(v(t))\Phi_\beta dxdt\right)^{pq}\\
&\leq C \int_0^T\int_{\R^N} G_1(v(t)) (\eta_R^*)^{2p'}dxdt\cdot R^{-\frac {m_1+2}2 \beta} (\log(R))^{p(q-1)}\\
&\leq C \int_0^T\int_{\R^N} G_1(v(t)) W_\beta (\eta_R^*)^{2p'}dxdt (\log(R))^{p(q-1)},
\end{aligned}
$$
that is
$$
\left(Y_2(R)\right)^{pq} \leq CR (\log(R))^{p(q-1)}Y_2'(R).
$$

Applying Lemma 3.1 with $\delta=\varepsilon^{pq}, p_1=pq, p_2=p(q-1)+1$, we obtain
$$
T(\varepsilon)\leq \exp (C \varepsilon^{-q(pq-1)}).
$$

\end{proof}

\subsection{Proof of Theorem \ref{Main2}}
\label{section4}
In the case that $m_1=m_2=m$, similar as in the proof of Theorem \ref{Main}, we could obtain
\begin{equation}
T(\varepsilon)\leq C \varepsilon^{-F_{SS}(N,m,q,p)^{-1}}
\end{equation}
if $F_{SS}(N,m,q,p)>0$,
and
$$
T(\varepsilon)\leq \exp (C \varepsilon^{-p(pq-1)}).
$$
if $F_{SS}(N,m,q,p)=0$ and $N\geq 2$. Thus, we have
\begin{equation}
T(\varepsilon)\leq \left\{
\begin{array}{ll}
C \varepsilon^{-\Gamma_{SS}(N,m,p,q)^{-1}},&\quad \mbox{if}\ \  \Gamma_{SS}(N,m,p,q)>0,\\
\exp (C \varepsilon^{-\min\{p(pq-1),q(pq-1)\}}),&\quad \mbox{if}\ \  \Gamma_{SS}(N,m,p,q)=0,\ p\neq q,\ \mbox{and}\ N\geq 2\\
\end{array}
\right.
\end{equation}
So, in this section, we just discuss the case that $\Gamma_{SS}(N,m,p,q)=0,\ p= q$ and $N\geq 2$. In this case, the assumption $\Gamma_{SS}(N,m,p,q)=0$ implies that the identities $ p=q=\rho_{c}$ hold, where $\rho_c=\rho_c(N,m)$ is defined as the positive root of the quadratic equation (\ref{criticalexp}). Let $\Phi\in C_0^{\infty}(\R^N\times[0,T))$. Since a pair of functions $(u,v)$ is a weak solution to the problem (\ref{sys}), by the assumption (\ref{nonlinearity}) and the elementary inequality $a^p+b^p\ge 2^{-p}(a+b)^p$ for $a,b\ge 0$, the estimates
\begin{align}
&-\int_0^T\int_{\R^N} \partial_t(u(t)+v(t))\partial_t\Phi(x,t)dxdt+\int_0^T\int_{\R^N}t^m \nabla (u(t)+v(t))\cdot \nabla \Phi(x,t) dxdt\notag\\
&\ge \varepsilon\int_{\R^N}(g_1+g_2)\Phi(0)dx+\int_0^T\int_{\R^N}(|v(t)|^p+|u(t)|^p)\Phi(x,t) dxdt\notag\\
&\geq \varepsilon\int_{\R^N}(g_1+g_2)\Phi(0)dx+2^{-p}\int_0^T\int_{\R^N}|v(t)+u(t)|^p\Phi dxdt\label{add}
\end{align}
hold. We define the functions $f\in C_0^{\infty}(\R^N)$, $g\in C_0^{\infty}(\R^N)$ and $z\in L^{p}_{loc}\left(\R^N\times(0,T)\right)$ given by $f:=f_1+f_2$, $g:=g_1+g_2$ and $z(t):=u(t)+v(t)$. Then the estimate \eqref{add} gives
\begin{equation}\label{w}
\varepsilon\int_{\R^N}g\Phi(0)dx+2^{-p}\int_0^T\int_{\R^N}|z(t)|^p\Phi(x,t) dxdt
\leq -\int_0^T\int_{\R^N} \partial_tz(t)\partial_t\Phi(x,t) dxdt+\int_0^T\int_{\R^N}t^m \nabla z(t)\cdot \nabla \Phi(x,t) dxdt.
\end{equation}

In the similar manner as the proof of the estimate \eqref{equ3}, by the estimate (\ref{add}) with $\Phi=\Phi_{\lambda_0}$, the estimate
\begin{equation}\label{equ5}
\varepsilon\int_{\R^N}g\Phi_{\lambda_0}(0)dx\leq C \left(\int_{0}^R\int_{\R^N} |z(s)|^p \{\eta_R(t)\}^{k}dxdt\right)^{\frac 1p} R^{\gamma_l(N,m,p)}
\end{equation}
holds for any $R\in (T_2,T)$.
Set $Y_1(R):=Y[|z|^pW_{\beta}](R)$.
Thus, by \eqref{equ5} and Proposition 2.4 with $\beta=-\frac {2m}{2(m+2)}-\frac m{2(m+2)}p-\frac {N-1}2 p+N$,
\begin{equation}
\begin{aligned}
Y_1'(R)R&=\int_0^T\int_{\R^N}|z|^p W_\beta (\eta_R^*)^{2p'}dxdt\\
&\geq C R^{-\frac {m+2}2 \beta}\int_0^T\int_{\R^N}|z|^p(\eta_R^*)^{2p'}dxdt\\
&\geq C R^{-\frac {m+2}2 \beta}\cdot R^{-\frac m4 p-\frac {m+2}2\frac {N-1}2 p+\left(\frac {m+2}2 N-\frac m2\right)}\left(\varepsilon\int_{\R^N}g(x)\varphi(x,0)dx\right)^p\\
&= C \left(\varepsilon\int_{\R^N}g(x)\varphi(x,0)dx\right)^p.
\end{aligned}
\end{equation}

To get the second inequality of Lemma 3.1 \eqref{lemma3.1}, we substitute $\Phi_\beta(x,t)=W_\beta(x,t)\chi(x)\eta_R^k(t)$ as the test function in \eqref{w}. Then integration by parts gives
\begin{equation}\label{equ6}
\begin{aligned}
&\varepsilon\int_{\R^N}g(x)\Phi_\beta(x,0)dx-\varepsilon\int_{\R^N}f(x)\partial_t\Phi_\beta(x,0)dx+\int_0^T\int_{\R^N}|z|^p\Phi_\beta dxdt\\
&\leq \int_0^T\int_{\R^N} z (\partial_t^2 \Phi_\beta-t^m \Delta\Phi_\beta) dxdt\\
&\leq C_1 R^{-1} \int_0^T\int_{\R^N} |z| \left(\exp{(t^{-\gamma}|x|)}t^{-\gamma(\beta+\frac 1\gamma)}+t^{\frac {m}2}W_{\beta+1}\right)(\eta_R^*)^{k-2}dxdt+C_2 R^{-2} \int_0^T\int_{\R^N} |z| W_{\beta}(\eta_R^*)^{k-2}dxdt\\
&\leq C R^{-1} \left(\int_0^T\int_{\R^N} |z|^p (\eta_R^*)^{2p'}dxdt\right)^{\frac 1p}\left(\int_{\frac R2}^R\int_{B(r_0+\frac 1\gamma t^\gamma)}\left(\exp{(t^{-\gamma}|x|)}t^{-\gamma(\beta+\frac 1\gamma)}\right)^{p'}dxdt\right)^{\frac 1{p'}}\\
&\quad C R^{-1} \left(\int_0^T\int_{\R^N} |z|^p (\eta_R^*)^{2p'}dxdt\right)^{\frac 1p}\left(\int_{\frac R2}^R\int_{B(r_0+\frac 1\gamma t^\gamma)}\left(t^{\frac m2}W_{\beta+1}\right)^{p'}dxdt\right)^{\frac 1{p'}}\\
&\quad +C R^{-2} \left(\int_0^T\int_{\R^N} |z|^p (\eta_R^*)^{2q'}dxdt\right)^{\frac 1p}\left(\int_{\frac R2}^R\int_{B(r_0+\frac 1\gamma t^\gamma)}W_{\beta}^{p'}dxdt\right)^{\frac 1{p'}}
\end{aligned}
\end{equation}
by \eqref{2-19}. Noting that $\beta=-\frac {2m}{2(m+2)}-\frac m{2(m+2)}p-\frac {N-1}2 p+N$ and $p=\rho_{c}(N,m)$, we could obtain
$$
\int_{\frac R2}^R\int_{B(r_0+\frac 1\gamma t^\gamma)}\left(\exp{(t^{-\gamma}|x|)}t^{-\gamma(\beta+\frac 1\gamma)}\right)^{p'}dxdt\leq C R^{(\frac m4-\frac {N-1}2\frac {m+2}2)p'+\frac{m+2}2 (N-1)+1} \log(R),
$$

$$
\int_{\frac R2}^R\int_{B(r_0+\frac 1\gamma t^\gamma)}\left(t^{\frac m2}W_{\beta+1}\right)^{p'}dxdt\leq C R^{(\frac m4-\frac {N-1}2\frac {m+2}2)p'+\frac{m+2}2 (N-1)+1} \log(R),
$$
and
$$
\int_{\frac R2}^R\int_{B(r_0+\frac 1\gamma t^\gamma)}\left(W_{\beta}\right)^{p'}dxdt\leq C R^{(\frac m4-\frac {N-1}2\frac {m+2}2)p'+\frac{m+2}2 (N-1)+1+p'} \log(R).
$$

Since $\int_{\R^N}g_1(x)\Phi_\beta(x,0)dx>0$ and $\partial_t\Phi_\beta(x,0)=0$, we get
$$
\begin{aligned}
&\left(\int_0^T\int_{\R^N}|z|^p\Phi_\beta dxdt\right)^{p}\\
&\leq C \int_0^T\int_{\R^N} |z|^p (\eta_R^*)^{2p'}dxdt\cdot R^{(\frac m4-\frac {N-1}2\frac {m+2}2)p+(\frac{m+2}2 (N-1)+1)\frac {p}{p'}-p} (\log(R))^{\frac {p}{p'}}\\
&\leq C \int_0^T\int_{\R^N} |z|^p W_\beta (\eta_R^*)^{2p'}dxdt (\log(R))^{p-1},
\end{aligned}
$$
by applying \eqref{Pro2.4.2}, since
$$
(\frac m4-\frac {N-1}2\frac {m+2}2)p+(\frac{m+2}2 (N-1)+1)\frac {p}{p'}-p=-\frac {m+2}2 \beta
$$
with $\beta=-\frac {2m}{2(m+2)}-\frac m{2(m+2)}p-\frac {N-1}2 p+N$.

That is
$$
\left(Y_1(R)\right)^{p} \leq CR (\log(R))^{(p-1)}Y_1'(R).
$$

Applying Lemma 3.1 with $\delta=\varepsilon^{p}, p_1=p_2=p$, we obtain
$$
T(\varepsilon)\leq \exp (C \varepsilon^{-p(p-1)}).
$$

\section*{Acknowedgements}
The first author is supported by JST CREST Grant Number JPMJCR1913, Japan and Grant-in-Aid for Young Scientists Research (B) No.15K17571 and Young Scientists Research (No.19K14581), Japan Society for the Promotion of Science. This work was done when the second author was visiting Riken and Keio University, with the support of Chinese Scholarship Council. The second author is supported by NSFC No. 11501511 and Zhejiang Provincial Nature Science Foundation of China under Grant No. LQ15A010012

\end{document}